\theoremstyle{theorem}
\newtheorem{thm}{Theorem}
\newtheorem{prop}[thm]{Proposition}
\newtheorem{cor}[thm]{Corollary}
\theoremstyle{definition}
\newtheorem{defn}[thm]{Definition}
\newtheorem{definition}[thm]{Definition}
\newtheorem{notation}[thm]{Notation}
\newtheorem{ex}[thm]{Example}
\newtheorem{example}[thm]{Example}
\newtheorem{remark}[thm]{Remark}
\newcommand{\raisemath}[1]{\mathpalette{\raisem@th{#1}}}
\newcommand{\raisem@th}[3]{\raisebox{#1}{$#2#3$}}
\numberwithin{thm}{section}
\def\m@th{\mathsurround\z@}
\def\cases#1{\left\{\,\vcenter{\normalbaselines\m@th
    \ialign{$##\hfil$&\quad##\hfil\crcr#1\crcr}}\right.}
\def\hang{\hangindent 24pt}
\def\d@nger{\medbreak\begingroup\clubpenalty=10000
  \def\par{\endgraf\endgroup\medbreak} %
  \noindent\hang\hangafter=-2
  \hbox to0pt{\hskip-\hangindent\dbend\hfill}}
\outer\def\danger{\d@nger}
\newcommand{\arxiv}[1]{\href{http://arxiv.org/abs/#1}{\texttt{arXiv:#1}}}
\newcommand{\rr}{\mathbb{R}}
\newcommand{\zz}{\mathbb{Z}}
\newcommand{\nn}{\mathbb{N}}
\newcommand{\kk}{\mathbb{K}}
\newcommand{\bbs}{\mathbb{S}}
\newcommand{\st}{\operatorname{ST}}
\newcommand{\ST}{\operatorname{ST}}
\newcommand{\ra}{\rightarrow}
\newcommand{\dra}{\dashrightarrow}
\newcommand{\down}{\nabla}
\newcommand{\up}{\Delta}
\newcommand{\BAR}{\operatorname{BAR}}
\newcommand{\BOR}{\operatorname{BOR}}
\newcommand{\NAR}{\operatorname{NAR}}
\newcommand{\sm}{\setminus}
\definecolor{green}{HTML}{006600}
\definecolor{orange}{HTML}{FF6200}
\definecolor{purple}{HTML}{990099}
\definecolor{coral}{HTML}{FF7F50}
\definecolor{mahogany}{HTML}{C04000}
\definecolor{gold}{HTML}{DAA541}
\definecolor{chocolate}{HTML}{D5691E}
\newcommand{\cala}{\mathcal{A}}
\newcommand{\calc}{\mathcal{C}}
\newcommand{\calf}{\mathcal{F}}
\newcommand{\calj}{\mathcal{J}}
\newcommand{\calo}{\mathcal{O}}
\newcommand{\calp}{\mathcal{P}}
\newcommand{\calr}{\mathcal{R}}
\newcommand{\cals}{\mathcal{S}}
\def\rowm{\rho}
\def\rowA{\rowm_{\mathcal{A}}}
\newcommand{\cblu}[1]{{\color{blue}#1}}
\newcommand{\ds}{\displaystyle}
\newcommand\mydots{\ifmmode\makebox[1.2em][c]{$\cdot$\hfil$\cdot$\hfil$\cdot$}\fi}
\author{Michael Joseph\affiliationmark{1}
  \and Tom Roby\affiliationmark{2}}
\title{A birational lifting of the Stanley--Thomas word on products of two chains}
\affiliation{
  Dalton State College, Dalton, GA, USA\\
  University of Connecticut, Storrs, CT, USA}
\keywords{antichains,
birational rowmotion,
chain polytope,
cyclic rotation,
dynamical algebraic combinatorics,
homomesy,
noncommutative algebra,
product of chains,
Stanley--Thomas word}
\begin{document}
\publicationdetails{23}{2021}{1}{17}{6633}
\maketitle
\begin{abstract}
  The dynamics of certain combinatorial actions and their liftings to actions at the
piecewise-linear and birational level have been studied lately with an eye towards questions
of periodicity, orbit structure, and invariants.  One key property enjoyed by the rowmotion
operator on certain finite partially-ordered sets is homomesy, where the average value of a
statistic is the same for all orbits.  To prove refined versions of homomesy in the product
of two chain posets, J.~Propp and the second author used an equivariant bijection discovered (less
formally) by R.~Stanley and H.~Thomas.

We explore the lifting of this ``Stanley--Thomas word'' to the piecewise-linear, birational,
and noncommutative realms.  Although the map is no longer a bijection, so cannot be used to
prove periodicity directly, it still gives enough information to prove the homomesy at the
piecewise-linear and birational levels (a result previously shown by D.~Grinberg, S.~Hopkins, and S.~Okada). Even at the
noncommutative level, the Stanley--Thomas word of a poset labeling rotates cyclically with
the lifting of antichain rowmotion.  Along the way we give some formulas for noncommutative
antichain rowmotion that we hope will be first steps towards proving the conjectured
periodicity at this level.

\end{abstract}

\section{Introduction}\label{sec:intro}

Birational liftings of combinatorial actions are a subject of active interest in algebraic
combinatorics.  There are birational versions of the Robinson--Schensted--Knuth correspondence~\cite{kirillov2001introduction,noumi-yamada}
and of the rowmotion operator on a poset.  In many cases these liftings are accomplished by
first extending the map to a piecewise-linear action on $\rr$-labelings of posets (typically
that live within a certain polytope).  From this piecewise-linear setting, we then
detropicalize to get an action on labelings of posets by rational functions. This was first
done for rowmotion of order ideals by Einstein and Propp, who also lifted some of the
homomesy properties from the combinatorial setting to these higher
levels~\cite{einpropp}.  Given an action on a set of combinatorial objects, we call a statistic
on those objects \emph{homomesic} if the average value of the statistic along every orbit
is the same~\cite{propproby}. 

Rowmotion and related operations (at the combinatorial, piecewise-linear, and birational
levels) can be realized as products of simple involutions, called
\emph{toggles}, thereby situating them within a toggle \emph{group}, whose properties can be
studied~\cite{cameronfonder}.  One way to lift actions such as rowmotion
is to simply lift the notion of toggling to the piecewise-linear and birational levels.
(This works also at the noncommutative level, though the toggles are no longer involutions.) 
Cameron and Fon-Der-Flaass's toggle group was originally for order ideals of a
poset, but the notion has been extended much more widely by Striker~\cite{strikergentog}, in
particular to toggling of antichains.  

Combinatorial rowmotion was originally studied as a map on antichains of a poset, though
it can be equivariantly considered as a map on order ideals.  In fact, one of the original
examples of homomesy was the conjecture of Panyushev, later proven by Armstrong, Stump, and
Thomas, that cardinality is homomesic for the action of antichain rowmotion on
root posets of finite-dimensional Lie algebras~\cite{panyushev,ast}.
So it was natural to consider
piecewise-linear and birational liftings of antichain rowmotion via their own toggle group.
The first author gave an equivariant bijection between the antichain toggle group and the
order-ideal one at the combinatorial and piecewise-linear levels~\cite{antichain-toggling}.  In later work we lifted
this bijection to the birational and noncommutative levels,
and constructed the birational
and noncommutative analogues of antichain rowmotion~\cite{BAR-motion}.  This allows us to
transfer some properties, such as periodicity and orbit structure, proven for one
action to the other.  

A key tool in dynamical algebraic combinatorics is the construction of equivariant
maps between actions of interest and actions which are easier to comprehend,
particularly ones that involve cyclic rotation.  When these maps are bijections, they
frequently explain most observed phenomena.  Even when these maps fail to be injective, they
still provide useful information about the action in question, as in the \emph{resonance}
phenomenon of Dilks, Pechenik, and Striker~\cite{dpsresonance, dilks-striker-vorland} and
the w-tuple of Grinberg and the second author~\cite[\S5]{GrRo16}.

In this paper we lift one such equivariant bijection for antichain rowmotion on rectangular posets
$P=[a]\times[b]$, where $[n]:= \{1,2,\dots,n\}$, called the \emph{Stanley--Thomas word}.
Each antichain corresponds to a binary string of length $a+b$, and rowmotion corresponds to
cyclically rotating the corresponding binary string.  Besides proving periodicity, this
bijection also allowed Propp and the second author to prove that ``fiber-restricted''
cardinality statistics (thus total cardinality also) were homomesic with respect to this action.  Our
lifting is no longer a bijection (so does not prove periodicity); however, it does exhibit the
corresponding homomesy properties at the piecewise-linear and birational levels.
Surprisingly, even at
the noncommutative level, it exhibits the key property of cyclically rotating equivariantly
with the lifting of antichain rowmotion to the noncommutative level.  This allows us to write
all our proofs in this realm, then specialize down to get the corresponding results at the
birational, piecewise-linear, and combinatorial levels.  

We organize the paper as follows.  In Section~\ref{sec:background} we set notation and
review necessary background regarding rowmotion, toggle groups, homomesy, and the combinatorial Stanley--Thomas
word.  In Section~\ref{sec:PLBRlifts}, we lift the Stanley--Thomas word to the piecewise-linear
and birational levels, and recall the definitions of BAR-motion (Birational Antichain
Rowmotion) and of (multiplicative) homomesy.  The main result is that the Stanley--Thomas word of a labeling
cyclically rotates equivariantly with BAR-motion acting on the labeling.  This lifts the proof of fiber homomesy for antichain rowmotion to the birational setting, a result previously written up using different means by S.~Hopkins, and also discovered independently by D.~Grinberg and S.~Okada in unpublished work~\cite[Remarks~4.44,~4.45]{hopkins2019minuscule}.  In
Section~\ref{sec:NClift} we further lift all of this to poset labelings by elements of a
\emph{skew field} $\bbs$ of characteristic zero, obtaining the analogous equivariant bijection.  Finally in
Section~\ref{sec:future}, we describe possible directions for future research, and take a
first step in one of those directions, giving explicit formulas for the first pass of BAR-motion through the poset $[a]\times [b]$. 

\section{Background}\label{sec:background}

Let $P$ be a finite poset.  An \textbf{antichain} of $P$ is a subset of (the elements of) $P$ which contains no two
comparable elements.  We denote the collection of all antichains of $P$ by $\cala (P)$. (For further background
information about posets, see~\cite[Ch.~3]{ec1ed2}.)

This paper will largely be concerned with the special (but important) poset that is a product of two chains:
$P=[a]\times [b]$, where $[n]:=\{1,2,\dots ,n \}$.
Richard Stanley and Hugh Thomas gave
a bijection $A \leftrightarrow w(A)$
between the set $\cala([a]\times[b])$ of antichains 
of $[a]\times[b]$ and the set of binary $(a+b)$-tuples
with exactly $a$ 0s and $b$ 1s. We now call $w(A)$ the Stanley--Thomas word of the antichain $A$~\cite[\S 3.3.2]{propproby}.

\begin{defn}[{\cite[remark after Thm.~2.5]{Sta09}}]\label{def:stcomb}
Fix $a,b\in \zz_{>0}$.  For $1\leq k\leq a$, the subset
$\{(k,\ell):1\leq \ell \leq b\}$ of $[a]\times[b]$ is called the \textbf{\textit{k}th positive fiber}.
For $1\leq \ell\leq b$, the subset
$\{(k,\ell):1\leq k \leq a\}$ of $[a]\times[b]$ is called the \textbf{\textit{$\ell$}th negative fiber}.
The {\bf Stanley--Thomas word} (or \textbf{ST word}) $w(A)$ of an antichain
$A\in\cala([a]\times[b])$ is the tuple
$(w_1,w_2,\dots,w_{a+b})$ given by:
$$w_i=\left\{\begin{array}{ll}
1 &\text{if $1\leq i \leq a$ and $A$ has an element in the $i$th positive fiber},\\
1 &\text{if $a+1\leq i \leq a+b$ and $A$ has NO element in the $(i-a)$th negative fiber},\\
0 &\text{otherwise}.\\
\end{array}\right.$$
\end{defn}

\begin{ex}\label{ex:SW-[3]x[5]}
Consider the poset $[3]\times[5]$ below, and the
antichain $A\in\cala([3]\times[5])$ shown on the right (where filled-in circles indicate the elements in $A$).
Since $A$ contains elements in the 2nd and 3rd positive fibers, the first 3 entries of $w(A)$ are $0,1,1$.
Since $A$ contains elements in the 1st and 4th negative fibers, the last 5 entries of $w(A)$ are
$0,1,1,0,1$.  So $w(A)=(0,1,1,0,1,1,0,1)$.
\begin{center}
\begin{footnotesize}
    \begin{tikzpicture}[xscale=.83, yscale=.475]
\node at (0,0) {$(1,1)$};
\node at (1,1) {$(1,2)$};
\node at (2,2) {$(1,3)$};
\node at (3,3) {$(1,4)$};
\node at (4,4) {$(1,5)$};
\node at (-1,1) {$(2,1)$};
\node at (0,2) {$(2,2)$};
\node at (1,3) {$(2,3)$};
\node at (2,4) {$(2,4)$};
\node at (3,5) {$(2,5)$};
\node at (-2,2) {$(3,1)$};
\node at (-1,3) {$(3,2)$};
\node at (0,4) {$(3,3)$};
\node at (1,5) {$(3,4)$};
\node at (2,6) {$(3,5)$};
\draw[thick] (0.25, 0.25) -- (0.75, 0.75);
\draw[thick] (1.25, 1.25) -- (1.75, 1.75);
\draw[thick] (2.25, 2.25) -- (2.75, 2.75);
\draw[thick] (3.25, 3.25) -- (3.75, 3.75);
\draw[thick] (-0.75, 1.25) -- (-0.25, 1.75);
\draw[thick] (0.25, 2.25) -- (0.75, 2.75);
\draw[thick] (1.25, 3.25) -- (1.75, 3.75);
\draw[thick] (2.25, 4.25) -- (2.75, 4.75);
\draw[thick] (-1.75, 2.25) -- (-1.25, 2.75);
\draw[thick] (-0.75, 3.25) -- (-0.25, 3.75);
\draw[thick] (0.25, 4.25) -- (0.75, 4.75);
\draw[thick] (1.25, 5.25) -- (1.75, 5.75);
\draw[thick] (-0.25, 0.25) -- (-0.75,0.75);
\draw[thick] (-1.25, 1.25) -- (-1.75,1.75);
\draw[thick] (0.75, 1.25) -- (0.25,1.75);
\draw[thick] (-0.25, 2.25) -- (-0.75,2.75);
\draw[thick] (1.75, 2.25) -- (1.25,2.75);
\draw[thick] (0.75, 3.25) -- (0.25,3.75);
\draw[thick] (2.75, 3.25) -- (2.25,3.75);
\draw[thick] (1.75, 4.25) -- (1.25,4.75);
\draw[thick] (3.75, 4.25) -- (3.25,4.75);
\draw[thick] (2.75, 5.25) -- (2.25,5.75);
\begin{scope}[shift={(8.4,0)},xscale=2/3]
\draw (0,0) circle [radius=0.35];
\draw (1,1) circle [radius=0.35];
\draw (2,2) circle [radius=0.35];
\draw (3,3) circle [radius=0.35];
\draw (4,4) circle [radius=0.35];
\draw (-1,1) circle [radius=0.35];
\draw (0,2) circle [radius=0.35];
\draw (1,3) circle [radius=0.35];
\draw[fill] (2,4) circle [radius=0.35];
\draw (3,5) circle [radius=0.35];
\draw[fill] (-2,2) circle [radius=0.35];
\draw (-1,3) circle [radius=0.35];
\draw (0,4) circle [radius=0.35];
\draw (1,5) circle [radius=0.35];
\draw (2,6) circle [radius=0.35];
\draw[thick] (0.25, 0.25) -- (0.75, 0.75);
\draw[thick] (1.25, 1.25) -- (1.75, 1.75);
\draw[thick] (2.25, 2.25) -- (2.75, 2.75);
\draw[thick] (3.25, 3.25) -- (3.75, 3.75);
\draw[thick] (-0.75, 1.25) -- (-0.25, 1.75);
\draw[thick] (0.25, 2.25) -- (0.75, 2.75);
\draw[thick] (1.25, 3.25) -- (1.75, 3.75);
\draw[thick] (2.25, 4.25) -- (2.75, 4.75);
\draw[thick] (-1.75, 2.25) -- (-1.25, 2.75);
\draw[thick] (-0.75, 3.25) -- (-0.25, 3.75);
\draw[thick] (0.25, 4.25) -- (0.75, 4.75);
\draw[thick] (1.25, 5.25) -- (1.75, 5.75);
\draw[thick] (-0.25, 0.25) -- (-0.75,0.75);
\draw[thick] (-1.25, 1.25) -- (-1.75,1.75);
\draw[thick] (0.75, 1.25) -- (0.25,1.75);
\draw[thick] (-0.25, 2.25) -- (-0.75,2.75);
\draw[thick] (1.75, 2.25) -- (1.25,2.75);
\draw[thick] (0.75, 3.25) -- (0.25,3.75);
\draw[thick] (2.75, 3.25) -- (2.25,3.75);
\draw[thick] (1.75, 4.25) -- (1.25,4.75);
\draw[thick] (3.75, 4.25) -- (3.25,4.75);
\draw[thick] (2.75, 5.25) -- (2.25,5.75);
\end{scope}
    \end{tikzpicture}
\end{footnotesize}
\end{center}
\end{ex}

One property of interest of the ST word is that the invertible map of
(antichain) {\bf rowmotion} $\rowA:\cala(P) \ra \cala(P)$ corresponds equivariantly to
cyclic rotation of the ST word. 
Rowmotion is a map first studied by Brouwer and Schrijver~\cite{brouwer1974period}
with several names in the literature;
the name ``rowmotion'' due to Striker and Williams~\cite{strikerwilliams} has stuck.

To define $\rowA$, we first define the sets $\calj(P)$
of order ideals of $P$ and $\calf(P)$ of order filters of $P$.
A subset $I\subseteq P$ is called an {\bf order ideal} (resp.\ {\bf order filter}) of $P$ 
if for all $x\in I$ and $y<x$ (resp.\ $y>x$) in $P$,
$y\in I$.
Using the notation of Einstein and Propp~\cite{einpropp} (Version 3, or
see~\cite{BAR-motion}), $\rowA=\down\circ \Theta \circ \up^{-1}$ where 
\begin{itemize}
\item $\Theta:\calj(P) \ra \calf(P)$ is the {\bf complementation} map given by $\Theta(I)=P\sm I$,
\item $\down: \calf(P) \ra \cala (P)$ is the {\bf down-transfer} map where $\down(F)$ is the set of minimal elements of the filter $F$,
\item $\up^{-1}: \cala (P) \ra \calj(P)$ is called {\bf downward saturation} or {\bf inverse up-transfer}.
For any antichain $A$ of $P$,
$\up^{-1}(A)=\{x\in P:x\leq y\text{ for some }y\in A\}$.
\end{itemize}

\begin{ex}\label{ex:[3]x[5]row}
For the antichain $A$ of Example~\ref{ex:SW-[3]x[5]}, the ST word is $w(A)=(0,1,1,0,1,1,0,1)$.
Below we show the effect of rowmotion on $A$ giving the antichain whose ST word is
$w\big(\rowA(A)\big)=(1,0,1,1,0,1,1,0)$, a rightward cyclic shift of $w(A)$.

\begin{tikzpicture}[scale=.39]
\begin{scope}[shift={(0,0)}]
\draw (0,0) circle [radius=0.35];
\draw (1,1) circle [radius=0.35];
\draw (2,2) circle [radius=0.35];
\draw (3,3) circle [radius=0.35];
\draw (4,4) circle [radius=0.35];
\draw (-1,1) circle [radius=0.35];
\draw (0,2) circle [radius=0.35];
\draw (1,3) circle [radius=0.35];
\draw[fill] (2,4) circle [radius=0.35];
\draw (3,5) circle [radius=0.35];
\draw[fill] (-2,2) circle [radius=0.35];
\draw (-1,3) circle [radius=0.35];
\draw (0,4) circle [radius=0.35];
\draw (1,5) circle [radius=0.35];
\draw (2,6) circle [radius=0.35];
\draw[thick] (0.25, 0.25) -- (0.75, 0.75);
\draw[thick] (1.25, 1.25) -- (1.75, 1.75);
\draw[thick] (2.25, 2.25) -- (2.75, 2.75);
\draw[thick] (3.25, 3.25) -- (3.75, 3.75);
\draw[thick] (-0.75, 1.25) -- (-0.25, 1.75);
\draw[thick] (0.25, 2.25) -- (0.75, 2.75);
\draw[thick] (1.25, 3.25) -- (1.75, 3.75);
\draw[thick] (2.25, 4.25) -- (2.75, 4.75);
\draw[thick] (-1.75, 2.25) -- (-1.25, 2.75);
\draw[thick] (-0.75, 3.25) -- (-0.25, 3.75);
\draw[thick] (0.25, 4.25) -- (0.75, 4.75);
\draw[thick] (1.25, 5.25) -- (1.75, 5.75);
\draw[thick] (-0.25, 0.25) -- (-0.75,0.75);
\draw[thick] (-1.25, 1.25) -- (-1.75,1.75);
\draw[thick] (0.75, 1.25) -- (0.25,1.75);
\draw[thick] (-0.25, 2.25) -- (-0.75,2.75);
\draw[thick] (1.75, 2.25) -- (1.25,2.75);
\draw[thick] (0.75, 3.25) -- (0.25,3.75);
\draw[thick] (2.75, 3.25) -- (2.25,3.75);
\draw[thick] (1.75, 4.25) -- (1.25,4.75);
\draw[thick] (3.75, 4.25) -- (3.25,4.75);
\draw[thick] (2.75, 5.25) -- (2.25,5.75);
\node at (5.7,2.8) {$\longmapsto$};
\node[below] at (5.7,2.8) {$\up^{-1}$};
\end{scope}
\begin{scope}[shift={(10,0)}]
\draw[fill] (0,0) circle [radius=0.35];
\draw[fill] (1,1) circle [radius=0.35];
\draw[fill] (2,2) circle [radius=0.35];
\draw[fill] (3,3) circle [radius=0.35];
\draw (4,4) circle [radius=0.35];
\draw[fill] (-1,1) circle [radius=0.35];
\draw[fill] (0,2) circle [radius=0.35];
\draw[fill] (1,3) circle [radius=0.35];
\draw[fill] (2,4) circle [radius=0.35];
\draw (3,5) circle [radius=0.35];
\draw[fill] (-2,2) circle [radius=0.35];
\draw (-1,3) circle [radius=0.35];
\draw (0,4) circle [radius=0.35];
\draw (1,5) circle [radius=0.35];
\draw (2,6) circle [radius=0.35];
\draw[thick] (0.25, 0.25) -- (0.75, 0.75);
\draw[thick] (1.25, 1.25) -- (1.75, 1.75);
\draw[thick] (2.25, 2.25) -- (2.75, 2.75);
\draw[thick] (3.25, 3.25) -- (3.75, 3.75);
\draw[thick] (-0.75, 1.25) -- (-0.25, 1.75);
\draw[thick] (0.25, 2.25) -- (0.75, 2.75);
\draw[thick] (1.25, 3.25) -- (1.75, 3.75);
\draw[thick] (2.25, 4.25) -- (2.75, 4.75);
\draw[thick] (-1.75, 2.25) -- (-1.25, 2.75);
\draw[thick] (-0.75, 3.25) -- (-0.25, 3.75);
\draw[thick] (0.25, 4.25) -- (0.75, 4.75);
\draw[thick] (1.25, 5.25) -- (1.75, 5.75);
\draw[thick] (-0.25, 0.25) -- (-0.75,0.75);
\draw[thick] (-1.25, 1.25) -- (-1.75,1.75);
\draw[thick] (0.75, 1.25) -- (0.25,1.75);
\draw[thick] (-0.25, 2.25) -- (-0.75,2.75);
\draw[thick] (1.75, 2.25) -- (1.25,2.75);
\draw[thick] (0.75, 3.25) -- (0.25,3.75);
\draw[thick] (2.75, 3.25) -- (2.25,3.75);
\draw[thick] (1.75, 4.25) -- (1.25,4.75);
\draw[thick] (3.75, 4.25) -- (3.25,4.75);
\draw[thick] (2.75, 5.25) -- (2.25,5.75);
\node at (5.7,2.8) {$\longmapsto$};
\node[below] at (5.7,2.8) {$\Theta$};
\end{scope}
\begin{scope}[shift={(20,0)}]
\draw (0,0) circle [radius=0.35];
\draw (1,1) circle [radius=0.35];
\draw (2,2) circle [radius=0.35];
\draw (3,3) circle [radius=0.35];
\draw[fill] (4,4) circle [radius=0.35];
\draw (-1,1) circle [radius=0.35];
\draw (0,2) circle [radius=0.35];
\draw (1,3) circle [radius=0.35];
\draw (2,4) circle [radius=0.35];
\draw[fill] (3,5) circle [radius=0.35];
\draw (-2,2) circle [radius=0.35];
\draw[fill] (-1,3) circle [radius=0.35];
\draw[fill] (0,4) circle [radius=0.35];
\draw[fill] (1,5) circle [radius=0.35];
\draw[fill] (2,6) circle [radius=0.35];
\draw[thick] (0.25, 0.25) -- (0.75, 0.75);
\draw[thick] (1.25, 1.25) -- (1.75, 1.75);
\draw[thick] (2.25, 2.25) -- (2.75, 2.75);
\draw[thick] (3.25, 3.25) -- (3.75, 3.75);
\draw[thick] (-0.75, 1.25) -- (-0.25, 1.75);
\draw[thick] (0.25, 2.25) -- (0.75, 2.75);
\draw[thick] (1.25, 3.25) -- (1.75, 3.75);
\draw[thick] (2.25, 4.25) -- (2.75, 4.75);
\draw[thick] (-1.75, 2.25) -- (-1.25, 2.75);
\draw[thick] (-0.75, 3.25) -- (-0.25, 3.75);
\draw[thick] (0.25, 4.25) -- (0.75, 4.75);
\draw[thick] (1.25, 5.25) -- (1.75, 5.75);
\draw[thick] (-0.25, 0.25) -- (-0.75,0.75);
\draw[thick] (-1.25, 1.25) -- (-1.75,1.75);
\draw[thick] (0.75, 1.25) -- (0.25,1.75);
\draw[thick] (-0.25, 2.25) -- (-0.75,2.75);
\draw[thick] (1.75, 2.25) -- (1.25,2.75);
\draw[thick] (0.75, 3.25) -- (0.25,3.75);
\draw[thick] (2.75, 3.25) -- (2.25,3.75);
\draw[thick] (1.75, 4.25) -- (1.25,4.75);
\draw[thick] (3.75, 4.25) -- (3.25,4.75);
\draw[thick] (2.75, 5.25) -- (2.25,5.75);
\node at (5.7,2.8) {$\longmapsto$};
\node[below] at (5.7,2.8) {$\down$};
\end{scope}
\begin{scope}[shift={(30,0)}]
\draw (0,0) circle [radius=0.35];
\draw (1,1) circle [radius=0.35];
\draw (2,2) circle [radius=0.35];
\draw (3,3) circle [radius=0.35];
\draw[fill] (4,4) circle [radius=0.35];
\draw (-1,1) circle [radius=0.35];
\draw (0,2) circle [radius=0.35];
\draw (1,3) circle [radius=0.35];
\draw (2,4) circle [radius=0.35];
\draw (3,5) circle [radius=0.35];
\draw (-2,2) circle [radius=0.35];
\draw[fill] (-1,3) circle [radius=0.35];
\draw (0,4) circle [radius=0.35];
\draw (1,5) circle [radius=0.35];
\draw (2,6) circle [radius=0.35];
\draw[thick] (0.25, 0.25) -- (0.75, 0.75);
\draw[thick] (1.25, 1.25) -- (1.75, 1.75);
\draw[thick] (2.25, 2.25) -- (2.75, 2.75);
\draw[thick] (3.25, 3.25) -- (3.75, 3.75);
\draw[thick] (-0.75, 1.25) -- (-0.25, 1.75);
\draw[thick] (0.25, 2.25) -- (0.75, 2.75);
\draw[thick] (1.25, 3.25) -- (1.75, 3.75);
\draw[thick] (2.25, 4.25) -- (2.75, 4.75);
\draw[thick] (-1.75, 2.25) -- (-1.25, 2.75);
\draw[thick] (-0.75, 3.25) -- (-0.25, 3.75);
\draw[thick] (0.25, 4.25) -- (0.75, 4.75);
\draw[thick] (1.25, 5.25) -- (1.75, 5.75);
\draw[thick] (-0.25, 0.25) -- (-0.75,0.75);
\draw[thick] (-1.25, 1.25) -- (-1.75,1.75);
\draw[thick] (0.75, 1.25) -- (0.25,1.75);
\draw[thick] (-0.25, 2.25) -- (-0.75,2.75);
\draw[thick] (1.75, 2.25) -- (1.25,2.75);
\draw[thick] (0.75, 3.25) -- (0.25,3.75);
\draw[thick] (2.75, 3.25) -- (2.25,3.75);
\draw[thick] (1.75, 4.25) -- (1.25,4.75);
\draw[thick] (3.75, 4.25) -- (3.25,4.75);
\draw[thick] (2.75, 5.25) -- (2.25,5.75);
\end{scope}
\end{tikzpicture}
\end{ex}

Clearly, as the ST word has length $a+b$, when we shift it
$a+b$ times, we obtain $w(A)$ again.
This proves rowmotion on $\cala([a]\times[b])$ has order $a+b$.
Propp and the second author also used it to prove a homomesy result in terms of fibers.
Let $\cals$ be a finite set, and $f:\cals \rightarrow \kk$ a
``statistic'' (any map) on $\cals$, where $\kk$ is a field of characteristic 0.  We call $f$ \textbf{homomesic} with
respect to an invertible map (aka ``action'')
$\varphi :\cals \rightarrow \cals$ if the average of $f$ over every $\varphi$-orbit is the
same~\cite{propproby}.
Consider the statistics $p_i:\cala(P)\ra\rr$ and $n_i:\cala(P)\ra\rr$ where $p_i(A)$ (resp.~$n_i(A)$) is 1 if $A$ has an element
in the $i$th positive fiber (resp.~negative fiber) and 0 otherwise.
It follows from the rotation property of the ST word that
$p_i$ and $n_i$ are homomesic with average
$b/(a+b)$ for $p_i$  and $a/(a+b)$ for $n_i$ on any orbit.
As the cardinality of an antichain can be expressed as
$p_1+p_2+\cdots+p_a$, we see that cardinality on $\cala(P)$ is homomesic with average $ab/(a+b)$~\cite[\S3.3.2]{propproby}.
See Figure~\ref{fig:2x2-CAR} for an illustration of this property for $a=b=2$.

\begin{figure}[h]
\begin{center}
\begin{tikzpicture}[scale=.54]
\begin{scope}[shift={(0,-4)}]
\draw[ultra thick] (0.13, 1.87) -- (0.87, 1.13);
\draw[red,ultra thick] (1.13, 1.13) -- (1.87, 1.87);
\draw[ultra thick] (1.87, 2.13) -- (1.13, 2.87);
\draw[blue,ultra thick] (0.13, 2.13) -- (0.87, 2.87);
\draw[red] (1,1) circle [radius=0.2];
\draw[blue] (0,2) circle [radius=0.2];
\draw[red] (2,2) circle [radius=0.2];
\draw[blue] (1,3) circle [radius=0.2];
\node at (1,0) {$({\color{red}0},{\color{blue}0},1,1)$};
\node at (1,-1) {${\color{blue}0}+{\color{red}0}=0$};
\node[right] at (-4,2) {Orbit:};
\node[right] at (-4,0) {ST word:};
\node[right] at (-4,-1) {cardinality:};
\end{scope}
\node at (3.5,-1.7) {$\stackrel{\rowA}{\longmapsto}$};
\begin{scope}[shift={(5,-4)}]
\draw[ultra thick] (0.13, 1.87) -- (0.87, 1.13);
\draw[red,ultra thick] (1.13, 1.13) -- (1.87, 1.87);
\draw[ultra thick] (1.87, 2.13) -- (1.13, 2.87);
\draw[blue,ultra thick] (0.13, 2.13) -- (0.87, 2.87);
\draw[red,fill] (1,1) circle [radius=0.2];
\draw[blue] (0,2) circle [radius=0.2];
\draw[red] (2,2) circle [radius=0.2];
\draw[blue] (1,3) circle [radius=0.2];
\node at (1,0) {$({\color{red}1},{\color{blue}0},0,1)$};
\node at (1,-1) {${\color{blue}0}+{\color{red}1}=1$};
\end{scope}
\node at (8.5,-1.7) {$\stackrel{\rowA}{\longmapsto}$};
\begin{scope}[shift={(10,-4)}]
\draw[ultra thick] (0.13, 1.87) -- (0.87, 1.13);
\draw[red,ultra thick] (1.13, 1.13) -- (1.87, 1.87);
\draw[ultra thick] (1.87, 2.13) -- (1.13, 2.87);
\draw[blue,ultra thick] (0.13, 2.13) -- (0.87, 2.87);
\draw[red] (1,1) circle [radius=0.2];
\draw[blue,fill] (0,2) circle [radius=0.2];
\draw[red,fill] (2,2) circle [radius=0.2];
\draw[blue] (1,3) circle [radius=0.2];
\node at (1,0) {$({\color{red}1},{\color{blue}1},0,0)$};
\node at (1,-1) {${\color{blue}1}+{\color{red}1}=2$};
\end{scope}
\node at (13.5,-1.7) {$\stackrel{\rowA}{\longmapsto}$};
\begin{scope}[shift={(15,-4)}]
\draw[ultra thick] (0.13, 1.87) -- (0.87, 1.13);
\draw[red,ultra thick] (1.13, 1.13) -- (1.87, 1.87);
\draw[ultra thick] (1.87, 2.13) -- (1.13, 2.87);
\draw[blue,ultra thick] (0.13, 2.13) -- (0.87, 2.87);
\draw[red] (1,1) circle [radius=0.2];
\draw[blue] (0,2) circle [radius=0.2];
\draw[red] (2,2) circle [radius=0.2];
\draw[blue,fill] (1,3) circle [radius=0.2];
\node at (1,0) {$({\color{red}0},{\color{blue}1},1,0)$};
\node at (1,-1) {${\color{blue}1}+{\color{red}0}=1$};
\node at (5.5,-1) {AVG: ${\color{blue}\frac12}+{\color{red}\frac12}=1$};
\end{scope}
\node at (18.5,-1.7) {$\stackrel{\rowA}{\longmapsto}$};
\node at (20,-1.7) {\Huge $:\hspace{-0.1 in}||$};
\draw[thick] (-4,-5.5) -- (23,-5.5);
\begin{scope}[shift={(0,-9)}]
\draw[ultra thick] (0.13, 1.87) -- (0.87, 1.13);
\draw[red,ultra thick] (1.13, 1.13) -- (1.87, 1.87);
\draw[ultra thick] (1.87, 2.13) -- (1.13, 2.87);
\draw[blue,ultra thick] (0.13, 2.13) -- (0.87, 2.87);
\draw[red] (1,1) circle [radius=0.2];
\draw[blue,fill] (0,2) circle [radius=0.2];
\draw[red] (2,2) circle [radius=0.2];
\draw[blue] (1,3) circle [radius=0.2];
\node at (1,0) {$({\color{red}0},{\color{blue}1},0,1)$};
\node at (1,-1) {${\color{blue}1}+{\color{red}0}=1$};
\node[right] at (-4,2) {Orbit:};
\node[right] at (-4,0) {ST word:};
\node[right] at (-4,-1) {cardinality:};
\end{scope}
\node at (3.5,-6.7) {$\stackrel{\rowA}{\longmapsto}$};
\begin{scope}[shift={(5,-9)}]
\draw[ultra thick] (0.13, 1.87) -- (0.87, 1.13);
\draw[red,ultra thick] (1.13, 1.13) -- (1.87, 1.87);
\draw[ultra thick] (1.87, 2.13) -- (1.13, 2.87);
\draw[blue,ultra thick] (0.13, 2.13) -- (0.87, 2.87);
\draw[red] (1,1) circle [radius=0.2];
\draw[blue] (0,2) circle [radius=0.2];
\draw[red,fill] (2,2) circle [radius=0.2];
\draw[blue] (1,3) circle [radius=0.2];
\node at (1,0) {$({\color{red}1},{\color{blue}0},1,0)$};
\node at (1,-1) {${\color{blue}0}+{\color{red}1}=1$};
\node at (5.5,-1) {AVG: ${\color{blue}\frac12}+{\color{red}\frac12}=1$};
\node at (5.5,-2) {\phantom{Make caption lower.}};
\end{scope}
\node at (8.5,-6.7) {$\stackrel{\rowA}{\longmapsto}$};
\node at (10,-6.7) {\Huge $:\hspace{-0.1 in}||$};
\end{tikzpicture}\vspace{-0.3 in}
\end{center}
\caption{The two orbits of $\rowA$ on $P=[2]\times[2]$.
The symbol $:\hspace{-0.05 in}||$ means to repeat, so $\rowA$ has order 4 on $P=[2]\times[2]$.  Below each antichain is its ST word and cardinality.  The average cardinality is $\frac{2\cdot2}{4}=1$
in both orbits.  The positive fiber statistics $p_1$
(in {\color{red}red}) and $p_2$ (in {\color{blue}blue}) have average $\frac24=\frac12$ across each orbit.}
\label{fig:2x2-CAR}
\end{figure}
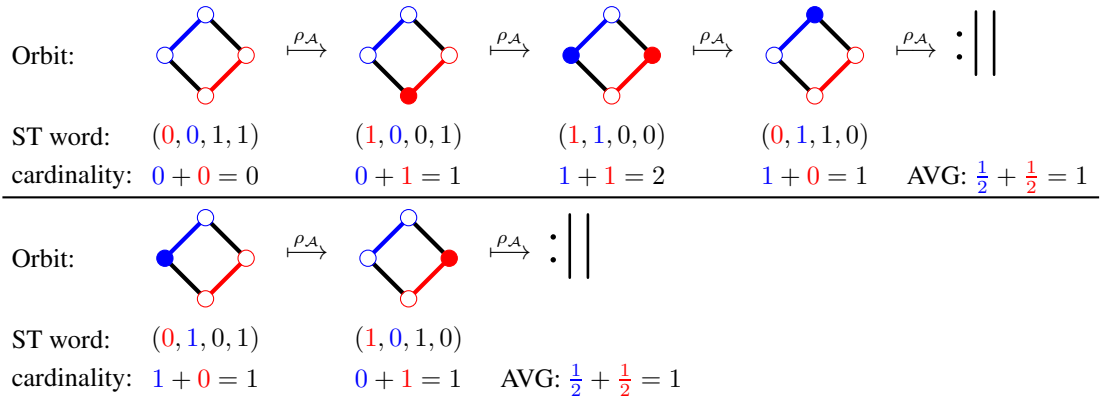

We can associate each antichain $A\in\cala([a]\times[b])$ to its indicator function
defined by: $A(i,j)$ is 1 if $(i,j)\in A$ and 0 if $(i,j)\not\in A$.
Then the ST word $w(A)$ has the following alternate description, since no fiber can contain multiple elements of $A$.

\begin{prop}\label{prop:SW-alternate}
Given $a,b\in\zz_{>0}$, $P=[a]\times[b]$, and
$A\in\cala(P)$, the $i$th entry of $w(A)$ is
$$w_i=\left\{\begin{array}{ll}
\sum\limits_{j=1}^b A(i,j) &\text{if $1\leq i \leq a$},\\
1 - \sum\limits_{j=1}^a A(j,i-a) &\text{if $a+1\leq i \leq a+b$}.\\
\end{array}\right.$$
\end{prop}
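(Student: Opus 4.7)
The plan is to reduce the proposition to the original definition of the Stanley--Thomas word by invoking the key structural fact that \emph{each fiber is itself a chain in $P = [a]\times[b]$, and hence contains at most one element of any antichain $A$}.

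First I would verify this chain observation. Any two distinct elements $(i,\ell_1)$ and $(i,\ell_2)$ of the $i$th positive fiber satisfy $(i,\ell_1) < (i,\ell_2)$ in $[a]\times[b]$ whenever $\ell_1 < \ell_2$, so the positive fiber is totally ordered; likewise for the $\ell$th negative fiber $\{(k,\ell) : 1\leq k \leq a\}$. Since $A$ is an antichain, it cannot contain two comparable elements, so $A$ meets each fiber in at most one point. Consequently the indicator sum $\sum_{j=1}^b A(i,j)$ lies in $\{0,1\}$, and equals $1$ precisely when $A$ has an element in the $i$th positive fiber. Similarly $\sum_{j=1}^a A(j,\ell)\in\{0,1\}$, and equals $1$ precisely when $A$ has an element in the $\ell$th negative fiber.

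For the case $1\leq i \leq a$, the definition of $w_i$ says $w_i = 1$ iff $A$ meets the $i$th positive fiber, which by the above is equivalent to $\sum_{j=1}^b A(i,j) = 1$; otherwise both sides equal $0$. For the case $a+1 \leq i \leq a+b$, the definition says $w_i = 1$ iff $A$ has \emph{no} element in the $(i-a)$th negative fiber, which by the above is equivalent to $\sum_{j=1}^a A(j,i-a) = 0$, i.e.\ to $1 - \sum_{j=1}^a A(j,i-a) = 1$; when $A$ does meet the fiber, both sides equal $0$. This gives the claimed formula in both cases.

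There is no real obstacle here: the content of the proposition is just that sums of indicator values over a chain behave like ``presence/absence'' Boolean statistics on antichains. The only thing worth being careful about is the distinction between the two coordinates indexing positive versus negative fibers (so that the sum is over $j = 1,\dots,b$ in the first case and $j = 1,\dots,a$ in the second), which is already handled by the index ranges written in the statement.
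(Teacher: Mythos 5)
Your proof is correct and takes essentially the same approach as the paper, which simply remarks before the proposition that "no fiber can contain multiple elements of $A$" and leaves the routine translation of cases implicit. You have merely spelled out that remark (fibers are chains, hence meet an antichain at most once) and checked both index ranges, which is exactly what the paper intends.
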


\begin{remark}
The convention in previous literature has been to use $-1$ in the definition of $w(A)$ in
place of 0.  At the combinatorial level, this difference is insignificant since it does not
change the key cyclic rotation property of the ST word under rowmotion. 
Using 0 allows us to use the conceptually simple expressions $\sum_{j=1}^b A(i,j)$ and $1-\sum_{j=1}^a A(j,i-a)$ 
in Proposition~\ref{prop:SW-alternate}, as opposed to $2\sum_{j=1}^b A(i,j) - 1$ and
$1-2\sum_{j=1}^a A(j,i-a)$ for the $-1$ convention.

\end{remark}

\section{Lifting to the
piecewise-linear and birational realms}\label{sec:PLBRlifts}

Einstein and Propp gave the first generalizations of rowmotion to the piecewise
linear and birational settings, lifting order-ideal rowmotion to an action on Stanley's
order polytope, thence to labelings of $P$ by rational functions~\cite{einpropp}.  The
parallel lifting of antichain rowmotion was done in ~\cite{antichain-toggling, BAR-motion}.
We just include the definitions and basic outline needed here; see the above papers for more details.  

\begin{definition}[\cite{Sta86}]
Let $\rr^{P}$ denote the set of labelings of the finite poset $P$ by real numbers.  
Within $\rr^{P}$ the \textbf{order polytope}
 of $P$
 is the set $\calo\calp(P)$ 
of labelings $f:P\ra [0,1]$ that are order-preserving: if $a\leq b$ in $P$, then $f(a)\leq f(b)$.
Similarly the \textbf{order-reversing polytope}
of $P$ is the set $\calo\calr(P)$ of labelings
$f: P \ra [0,1]$ that are order-reversing:
if $a\leq b$ in $P$, then $f(a)\geq f(b)$.
The \textbf{chain polytope} of $P$ is the set $\calc(P)$ of labelings $f:P\ra [0,1]$ such that the sum of the labels
across every chain is at most 1.  
\end{definition}
By associating a subset of $P$ with its indicator function, the sets $\calf(P)$, $\calj(P)$, and $\cala(P)$ describe the vertices of $\calo\calp(P)$, $\calo\calr(P)$, and $\calc(P)$ respectively~\cite{Sta86}.

There are eight kinds of rowmotion:
order and antichain, each
considered at the combinatorial, piecewise-linear, birational, or noncommutative levels.  
Each can be described in two
different ways.  One is as composition of three maps that generalize complementation,
down-transfer (combinatorially, taking minimal elements of an order filter), and downward
saturation.   The other is as a ``Coxeter element'' of appropriate toggles performed once at
each element of $P$ along a linear
extension~\cite{cameronfonder,einpropp,antichain-toggling,BAR-motion}. 
For simplicity we just give the former
definitions of these various rowmotions via the three-step compositions; however, the
toggling definitions are easily accessible in our earlier papers.

\begin{defn}[{\cite[\S4]{einpropp}}]\label{def:trans-PL}
The maps $\Theta: \rr^P \ra \rr^P$, $\down: \calo\calp(P) \ra \calc(P)$,
$\up: \calo\calr(P) \ra \calc(P)$, and their inverses are given as follows.
To ensure that we are never taking the maximum of an empty set,
we extend $P$ to the poset $\widehat{P}$
by adjoining a minimal element $\widehat{0}$ and
maximal element $\widehat{1}$,
with $\widehat{0}<x<\widehat{1}$
for all $x\in P$.  If $x$ and $y$ in $P$ satisfy $x<y$ and there is no $z\in P$ such that
$x<z<y$, then we say \textbf{$y$ covers $x$} or \textbf{$x$ is covered by $y$} and write
$x\lessdot y$ or $y\gtrdot x$. 

For all $f\in \kk^P$ and $x\in P$ we set:
\begin{align*}
    (\Theta f)(x) &= 1-f(x),\\
    (\down f)(x) &= f(x)-{\max\limits_{y\lessdot x} f(y)}\  \cblu{\big(\text{with } f\big(\widehat{0}\big)=0\big)}, \\
    (\up f)(x) &= 
    f(x) - {\max\limits_{y\gtrdot x} f(y)}\  \cblu{\big(\text{with } f\big(\widehat{1}\big)=0\big)},\\
    \left(\down^{-1}f\right)(x) &=
    \max\left\{ f(y_1)+f(y_2)+\cdots +f(y_k):
    \widehat{0}\lessdot y_1\lessdot y_2 \lessdot \cdots \lessdot y_k=x \right\}\\
    &=f(x)+ \max\limits_{y\lessdot x}\left(\down^{-1}f\right)(y) \cblu{\;\; \big(\text{with } (\down^{-1}f)\big(\widehat{0}\big)=0\big)},\\
    \left(\up^{-1}f\right)(x) &=
    \max\left\{ f(y_1)+ f(y_2)+\cdots +f(y_k):x=y_1
\lessdot y_2\lessdot \cdots \lessdot y_k\lessdot \widehat{1}\right\}\\
    &=f(x)+ \max\limits_{y\gtrdot x}\left(\up^{-1}f\right)(y) \;\; \cblu{\big(\text{with } (\up^{-1}f)\big(\widehat{1}\big)=0\big)}. 
\end{align*}
\end{defn}


We use the same symbols in each realm (combinatorial, piecewise-linear, birational, and noncommutative), allowing
context to clarify which is meant. Using Proposition~\ref{prop:SW-alternate},
the Stanley--Thomas word naturally generalizes from $\cala(P)$ to $\calc(P)$. (Here we have set the $\alpha$ and $\omega$
of \cite[\S4]{einpropp} to 0 and 1, respectively.  The maps $\nabla^{-1}$ and $\Delta^{-1}$ were formerly denoted as
$\mathbf{OP}$ and $\mathbf{OR}$ in \cite{antichain-toggling}.)  

\begin{defn}[\cite{einpropp,antichain-toggling}]
\label{def:rho-PL}
In the piecewise-linear setting,
we define \textbf{PL antichain rowmotion}
(or \textbf{chain-polytope rowmotion}) by
$\rho_\calc= \down\circ\Theta\circ\up^{-1}: \calc (P)\ra \calc (P)$.  
\end{defn}

There is also PL \emph{order} rowmotion (or \emph{order-polytope rowmotion}) defined as
$\Theta\circ\up^{-1}\circ\down: \calo\calp(P)\ra \calo\calp(P)$.
While the order rowmotion maps have received more attention thus far in the dynamical algebraic combinatorics community,
we will only use the antichain rowmotion maps in this paper.  Note that the order and antichain perspectives are related
through equivariance.

\begin{defn}\label{def:ST-PL}
Let $a,b\in\zz_{>0}$, $P=[a]\times[b]$, and
$g\in\calc(P)$.
The piecewise-linear \textbf{Stanley--Thomas word} (or \textbf{ST word}) $\st_g$ is the $(a+b)$-tuple
whose $i$th entry is given by
$$\st_g(i)=\left\{\begin{array}{ll}
\sum\limits_{j=1}^b g(i,j) &\text{if $1\leq i \leq a$},\\
1 - \sum\limits_{j=1}^a g(j,i-a) &\text{if $a+1\leq i \leq a+b$}.\\
\end{array}\right.$$
\end{defn}

\begin{remark}
Note that when $g$ is the indicator function of an antichain $A$, we get $\st_{g} = w(A)$,
the original ST word from Definition~\ref{def:stcomb}.
One fundamental difference between the combinatorial ST word on $\cala(P)$
and the piecewise-linear analogue on $\calc(P)$
is that an element $g\in\calc(P)$
is not uniquely determined from $\st_g$.  
For example, in $\calc([2]\times[2])$, both of the
following labelings have the same ST word:
$(0.6,0.5,0.7,0.2)$.
\begin{center}
    \begin{tikzpicture}[scale=8/9]
    \node at (0,0) {$0.1$};
    \node at (-1,1) {$0.2$};
    \node at (1,1) {$0.5$};
    \node at (0,2) {$0.3$};
    \draw (-0.3,0.3) -- (-0.7,0.7);
    \draw (0.3,0.3) -- (0.7,0.7);
    \draw (-0.3,1.7) -- (-0.7,1.3);
    \draw (0.3,1.7) -- (0.7,1.3);
    \node at (5,0) {$0.2$};
    \node at (4,1) {$0.1$};
    \node at (6,1) {$0.4$};
    \node at (5,2) {$0.4$};
    \draw (4.7,0.3) -- (4.3,0.7);
    \draw (5.3,0.3) -- (5.7,0.7);
    \draw (4.7,1.7) -- (4.3,1.3);
    \draw (5.3,1.7) -- (5.7,1.3);
    \end{tikzpicture}
\end{center}
\end{remark}

The Stanley--Thomas word rotates equivariantly with the action of chain-polytope rowmotion on the
corresponding labeling and allows us to derive some refined homomesies, analogous to the
combinatorial case.
See Figure~\ref{fig:2x2-PLAR} for a sample orbit.
Proofs will follow by tropicalizing their birational analogues, which we now construct. 

\begin{figure}
\begin{center}
\begin{small}
\begin{tikzpicture}[scale=.51]
\begin{scope}[shift={(1,-3)}]
\draw[blue,ultra thick] (-0.3, 1.7) -- (-0.7, 1.3);
\draw[ultra thick] (0.3, 1.7) -- (0.7, 1.3);
\draw[ultra thick] (-0.7, 0.7) -- (-0.3, 0.3);
\draw[red, ultra thick] (0.7, 0.7) -- (0.3, 0.3);
\node at (0,2) {$  {\color{blue} 0.3}  $};
\node at (-1,1) {$ {\color{blue} 0.1}  $};
\node at (1,1) {$  {\color{red} 0.4}  $};
\node at (0,0) {$  {\color{red} 0.2}  $};
\node at (0,-1) {$({\color{red}0.6},{\color{blue}0.4},0.7,0.3)$};
\node at (0,-2) {${\color{blue}0.4}+{\color{red}0.6}=1$};
\node[right] at (-5.7,1) {Orbit:};
\node[right] at (-5.7,-1) {ST word:};
\node[right] at (-5.7,-2) {label sum:};
\end{scope}
\node at (3.5,-1.7) {$\stackrel{\rho_{\calc}}{\longmapsto}$};
\begin{scope}[shift={(6,-3)}]
\draw[blue,ultra thick] (-0.3, 1.7) -- (-0.7, 1.3);
\draw[ultra thick] (0.3, 1.7) -- (0.7, 1.3);
\draw[ultra thick] (-0.7, 0.7) -- (-0.3, 0.3);
\draw[red, ultra thick] (0.7, 0.7) -- (0.3, 0.3);
\node at (0,2) {$  {\color{blue} 0.1}  $};
\node at (-1,1) {$ {\color{blue} 0.5}  $};
\node at (1,1) {$  {\color{red} 0.2}  $};
\node at (0,0) {$  {\color{red} 0.1}  $};
\node at (0,-1) {$({\color{red}0.3},{\color{blue}0.6},0.4,0.7)$};
\node at (0,-2) {${\color{blue}0.6}+{\color{red}0.3}=0.9$};
\end{scope}
\node at (8.5,-1.7) {$\stackrel{\rho_{\calc}}{\longmapsto}$};
\begin{scope}[shift={(11,-3)}]
\draw[blue,ultra thick] (-0.3, 1.7) -- (-0.7, 1.3);
\draw[ultra thick] (0.3, 1.7) -- (0.7, 1.3);
\draw[ultra thick] (-0.7, 0.7) -- (-0.3, 0.3);
\draw[red, ultra thick] (0.7, 0.7) -- (0.3, 0.3);
\node at (0,2) {$  {\color{blue} 0.2}  $};
\node at (-1,1) {$ {\color{blue} 0.1}  $};
\node at (1,1) {$  {\color{red} 0.4}  $};
\node at (0,0) {$  {\color{red} 0.3}  $};
\node at (0,-1) {$({\color{red}0.7},{\color{blue}0.3},0.6,0.4)$};
\node at (0,-2) {${\color{blue}0.3}+{\color{red}0.7}=1$};
\end{scope}
\node at (13.5,-1.7) {$\stackrel{\rho_{\calc}}{\longmapsto}$};
\begin{scope}[shift={(16,-3)}]
\draw[blue,ultra thick] (-0.3, 1.7) -- (-0.7, 1.3);
\draw[ultra thick] (0.3, 1.7) -- (0.7, 1.3);
\draw[ultra thick] (-0.7, 0.7) -- (-0.3, 0.3);
\draw[red, ultra thick] (0.7, 0.7) -- (0.3, 0.3);
\node at (0,2) {$  {\color{blue} 0.1}  $};
\node at (-1,1) {$ {\color{blue} 0.6}  $};
\node at (1,1) {$  {\color{red} 0.3}  $};
\node at (0,0) {$  {\color{red} 0.1}  $};
\node at (0,-1) {$({\color{red}0.4},{\color{blue}0.7},0.3,0.6)$};
\node at (0,-2) {${\color{blue}0.7}+{\color{red}0.4}=1.1$};
\node at (5.5,-2) {AVG: ${\color{blue}0.5}+{\color{red}0.5}=1$};
\node at (5.5,-2) {\phantom{Make caption lower.}};
\end{scope}
\node at (18.5,-1.7) {$\stackrel{\rho_{\calc}}{\longmapsto}$};
\node at (20,-1.7) {\Huge $:\hspace{-0.1 in}||$};
\end{tikzpicture}
\end{small}
\end{center}
\caption{One orbit of chain-polytope rowmotion on $P=[2]\times[2]$.
The label sum is the analogue of cardinality in the piecewise-linear realm.
The positive fiber statistics $p_1$
(in {\color{red}red}) and $p_2$ (in {\color{blue}blue}) have average $0.5$ across each orbit.}
\label{fig:2x2-PLAR}
\end{figure}
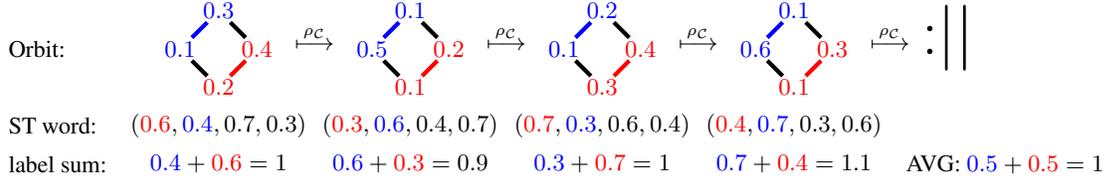

Let $\kk$ be a field of characteristic zero, and let $\kk^P$ denote the set of labelings
$f:P\ra \kk $ of the elements of $P$ by elements of $\kk$.  
To lift piecewise-linear maps to birational maps over the field $\kk$, we detropicalize by replacing
the $\max$ operation with addition, 
addition with multiplication, 
subtraction with division, and
the additive identity 0 with the multiplicative identity 1.
Additionally, we replace 1 with a generic fixed constant $\kappa\in\kk$.  The following birational lifts
are the detropicalizations
of the piecewise-linear maps in Definition~\ref{def:trans-PL}.

\begin{defn}[{\cite[\S6]{einpropp}}]\label{def:bir-trans}
Fix a generic constant $\kappa\in\kk$.
We define the following birational maps $\Theta,\nabla,\Delta
:\mathbb{K}^{P}\dashrightarrow\mathbb{K}^{P}$.  
We call $\Theta$ {\bf complementation}, $\down$ {\bf down-transfer},
and $\up$ {\bf up-transfer}.
We again extend $P$ to the poset $\widehat{P}$ as in Definition~\ref{def:trans-PL}.
For all $f\in \kk^P$ and $x\in P$ we set:
\begin{align*}
    (\Theta f)(x) &= \frac{\kappa}{f(x)},\\
    (\down f)(x) &= \frac{f(x)}{\sum\limits_{y\lessdot x} f(y)} \;\;\cblu{\big(\text{with } f\big(\widehat{0}\big)=1\big)},\\
    (\up f)(x) &= 
    \frac{f(x)}{\sum\limits_{y\gtrdot x} f(y)} \;\;\cblu{\big(\text{with } f\big(\widehat{1}\big)=1\big)},\\
    \left(\down^{-1}f\right)(x) &=
    \sum_{\widehat{0}\lessdot y_1\lessdot y_2 \lessdot \cdots \lessdot y_k=x} \kern -25pt f(y_1)f(y_2)\cdots f(y_k)
            =f(x) \sum\limits_{y\lessdot x}\left(\down^{-1}f\right)(y) \;\; \cblu{\big(\text{with } (\down^{-1}f)\big(\widehat{0}\big)=1\big)},\\
    \left(\up^{-1}f\right)(x) &=
    \sum_{x=y_1 \lessdot y_2\lessdot \cdots \lessdot y_k\lessdot \widehat{1}} \kern -25pt f(y_1)f(y_2)\cdots f(y_k)
              =f(x) \sum\limits_{y\gtrdot x}\left(\up^{-1}f\right)(y) \;\; \cblu{\big(\text{with } (\up^{-1}f)\big(\widehat{1}\big)=1\big)}.
\end{align*}
\end{defn}

\begin{defn}[\cite{BAR-motion}]\label{def:rho-BR}
In the birational setting,
we define \textbf{birational order rowmotion} (or
\textbf{BOR-motion}) as $\BOR=\Theta\circ\up^{-1}\circ\down$. 
Similarly,
we define \textbf{birational antichain rowmotion (BAR-motion)} as
$\BAR= \down\circ\Theta\circ\up^{-1}$.  
\end{defn}

We refer the reader to our earlier work~\cite{BAR-motion}
for more detail about BAR-motion.
See Figure~\ref{fig:2x3-BAR} for one iteration of $\BAR$-motion on $[2]\times[3]$.

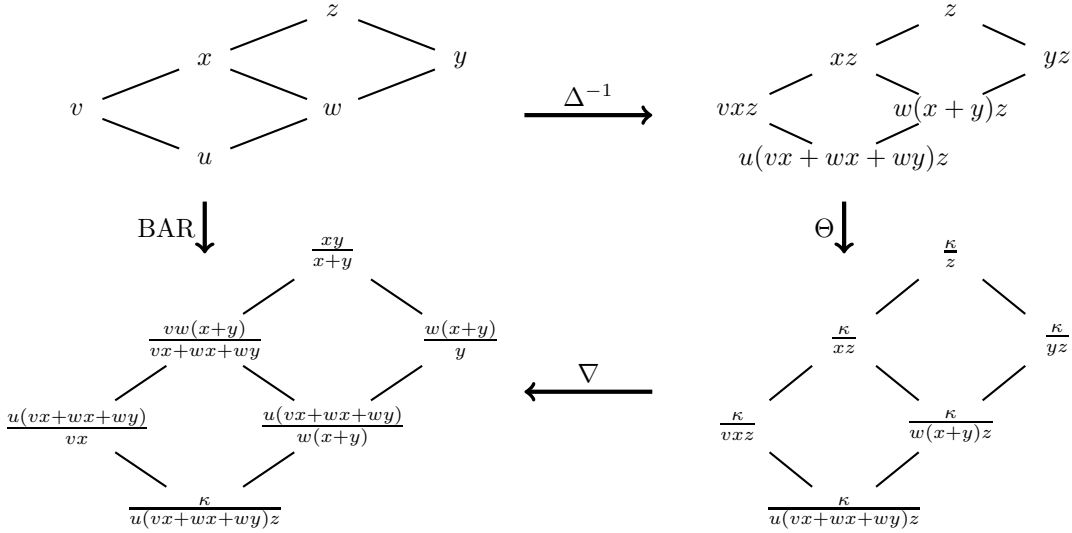
\begin{figure}
\begin{center}
\begin{tikzpicture}[xscale=1.7,yscale=1.15]
\begin{scope}[yscale=4/7]
\node at (0,0) {$u$};
\node at (-1,1) {$v$};
\node at (1,1) {$w$};
\node at (0,2) {$x$};
\node at (2,2) {$y$};
\node at (1,3) {$z$};
\draw[thick] (-0.2,0.2) -- (-0.8,0.8);
\draw[thick] (0.2,0.2) -- (0.8,0.8);
\draw[thick] (1.2,1.2) -- (1.8,1.8);
\draw[thick] (0.2,2.2) -- (0.8,2.8);
\draw[thick] (-0.2,1.8) -- (-0.8,1.2);
\draw[thick] (0.2,1.8) -- (0.8,1.2);
\draw[thick] (1.2,2.8) -- (1.8,2.2);
\end{scope}
\draw[ultra thick, ->] (0,-0.5) -- (0,-1.1);
\node[left] at (0,-0.8) {$\BAR$};
\draw[ultra thick, ->] (2.5,0.5) -- (3.5,0.5);
\node[above] at (3,0.5) {$\up^{-1}$};
\node[above] at (3,-2.7) {$\down$};
\draw[ultra thick, ->] (3.5,-2.7) -- (2.5,-2.7);
\draw[ultra thick, ->] (5,-0.5) -- (5,-1.1);
\node[left] at (5,-0.8) {$\Theta$};
\begin{scope}[yscale=4/7,shift={(5,0)},xscale=5/6]
\node at (0,0) {$u(vx+wx+wy)z$};
\node at (-1,1) {$vxz$};
\node at (1,1) {$w(x+y)z$};
\node at (0,2) {$xz$};
\node at (2,2) {$yz$};
\node at (1,3) {$z$};
\draw[thick] (-0.3,0.3) -- (-0.7,0.7);
\draw[thick] (0.3,0.3) -- (0.7,0.7);
\draw[thick] (1.3,1.3) -- (1.7,1.7);
\draw[thick] (0.3,2.3) -- (0.7,2.7);
\draw[thick] (-0.3,1.7) -- (-0.7,1.3);
\draw[thick] (0.3,1.7) -- (0.7,1.3);
\draw[thick] (1.3,2.7) -- (1.7,2.3);
\end{scope}
\begin{scope}[shift={(0,-4.1)}]
\node at (0,0) {$\frac{\kappa}{u(vx+wx+wy)z}$};
\node at (-1,1) {$\frac{u(vx+wx+wy)}{vx}$};
\node at (1,1) {$\frac{u(vx+wx+wy)}{w(x+y)}$};
\node at (0,2) {$\frac{vw(x+y)}{vx+wx+wy}$};
\node at (2,2) {$\frac{w(x+y)}{y}$};
\node at (1,3) {$\frac{xy}{x+y}$};
\draw[thick] (-0.3,0.3) -- (-0.7,0.7);
\draw[thick] (0.3,0.3) -- (0.7,0.7);
\draw[thick] (1.3,1.3) -- (1.7,1.7);
\draw[thick] (0.3,2.3) -- (0.7,2.7);
\draw[thick] (-0.3,1.7) -- (-0.7,1.3);
\draw[thick] (0.3,1.7) -- (0.7,1.3);
\draw[thick] (1.3,2.7) -- (1.7,2.3);
\end{scope}
\begin{scope}[shift={(5,-4.1)},xscale=5/6]
\node at (0,0) {$\frac{\kappa}{u(vx+wx+wy)z}$};
\node at (-1,1) {$\frac{\kappa}{vxz}$};
\node at (1,1) {$\frac{\kappa}{w(x+y)z}$};
\node at (0,2) {$\frac{\kappa}{xz}$};
\node at (2,2) {$\frac{\kappa}{yz}$};
\node at (1,3) {$\frac{\kappa}{z}$};
\draw[thick] (-0.3,0.3) -- (-0.7,0.7);
\draw[thick] (0.3,0.3) -- (0.7,0.7);
\draw[thick] (1.3,1.3) -- (1.7,1.7);
\draw[thick] (0.3,2.3) -- (0.7,2.7);
\draw[thick] (-0.3,1.7) -- (-0.7,1.3);
\draw[thick] (0.3,1.7) -- (0.7,1.3);
\draw[thick] (1.3,2.7) -- (1.7,2.3);
\end{scope}
\end{tikzpicture}
\end{center}
\caption{One iteration of $\BAR$-motion on $[2]\times[3]$.}
\label{fig:2x3-BAR}
\end{figure}

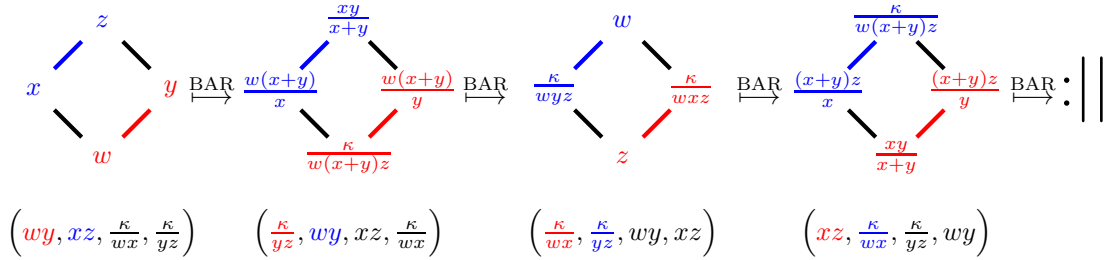
\begin{figure}
    \centering
\begin{tikzpicture}[scale=.91]
\begin{scope}[shift={(.4,0)}]
\draw[blue, ultra thick] (-0.3, 1.7) -- (-0.7, 1.3);
\draw[ultra thick] (0.3, 1.7) -- (0.7, 1.3);
\draw[ultra thick] (-0.7, 0.7) -- (-0.3, 0.3);
\draw[red, ultra thick] (0.7, 0.7) -- (0.3, 0.3);
\node at (0,2) {$ {\color{blue} z  }$};
\node at (-1,1) {$  {\color{blue}x  }$};
\node at (1,1) {$  {\color{red}y } $};
\node at (0,0) {$  {\color{red}w } $};
\node at (1.6,1) {$\stackrel{\BAR}{\longmapsto}$};
\node at (0,-1.1) {$\left({\color{red}wy},{\color{blue}xz},\frac{\kappa}{wx},\frac{\kappa}{yz}\right)$};
\end{scope}
\begin{scope}[shift={(4,0)}]
\draw[blue, ultra thick] (-0.3, 1.7) -- (-0.7, 1.3);
\draw[ultra thick] (0.3, 1.7) -- (0.7, 1.3);
\draw[ultra thick] (-0.7, 0.7) -- (-0.3, 0.3);
\draw[red, ultra thick] (0.7, 0.7) -- (0.3, 0.3);
\node at (0,2) {$ {\color{blue} \frac{xy}{x+y} } $};
\node at (-1,1) {$ {\color{blue} \frac{w(x+y)}{x} } $};
\node at (1,1) {$ {\color{red} \frac{w(x+y)}{y} } $};
\node at (0,0) {$ {\color{red} \frac{\kappa}{w(x+y)z} } $};
\node at (2,1) {$\stackrel{\BAR}{\longmapsto}$};
\node at (0,-1.1) {$\left({\color{red}\frac{\kappa}{yz}},{\color{blue}wy},xz,\frac{\kappa}{wx}\right)$};
\end{scope}
\begin{scope}[shift={(8,0)}]
\draw[blue, ultra thick] (-0.3, 1.7) -- (-0.7, 1.3);
\draw[ultra thick] (0.3, 1.7) -- (0.7, 1.3);
\draw[ultra thick] (-0.7, 0.7) -- (-0.3, 0.3);
\draw[red, ultra thick] (0.7, 0.7) -- (0.3, 0.3);
\node at (0,2) {${\color{blue}  w  }$};
\node at (-1,1) {$ {\color{blue} \frac{\kappa}{wyz}  }$};
\node at (1,1) {$ {\color{red} \frac{\kappa}{wxz}  }$};
\node at (0,0) {${\color{red}  z  }$};
\node at (2,1) {$\stackrel{\BAR}{\longmapsto}$};
\node at (0,-1.1) {$\left({\color{red}\frac{\kappa}{wx}},{\color{blue}\frac{\kappa}{yz}},wy,xz\right)$};
\end{scope}
\begin{scope}[shift={(12,0)}]
\draw[blue, ultra thick] (-0.3, 1.7) -- (-0.7, 1.3);
\draw[ultra thick] (0.3, 1.7) -- (0.7, 1.3);
\draw[ultra thick] (-0.7, 0.7) -- (-0.3, 0.3);
\draw[red, ultra thick] (0.7, 0.7) -- (0.3, 0.3);
\node at (0,2) {$ {\color{blue} \frac{\kappa}{w(x+y)z} } $};
\node at (-1,1) {$ {\color{blue} \frac{(x+y)z}{x}  }$};
\node at (1,1) {$ {\color{red} \frac{(x+y)z}{y}  }$};
\node at (0,0) {$ {\color{red} \frac{xy}{x+y}  }$};
\node at (2,1) {$\stackrel{\BAR}{\longmapsto}$};
\node at (0,-1.1) {$\left({\color{red}xz},{\color{blue}\frac{\kappa}{wx}},\frac{\kappa}{yz},wy\right)$};
\node at (2.75,1) {\Huge $:\hspace{-0.1 in}||$};
\end{scope}
\end{tikzpicture}
    \caption{The full orbit of $\BAR$ on a generic labeling
    for $P=[2]\times[2]$.  Below each labeling is its ST word, illustrating Theorem~\ref{thm:st}.}
    \label{fig:2x2-BAR}
\end{figure}

We detropicalize Definition~\ref{def:ST-PL}
to obtain the birational Stanley--Thomas word.

\begin{defn} \label{def:STword}
Let $a,b\in\zz_{>0}$, $P=[a]\times[b]$, and $g\in\kk^P$.  The birational \textbf{Stanley--Thomas word}
(or \textbf{ST word})
$\st_g$ is the $(a+b)$-tuple
given by
$$\st_g(i)=\left\{\begin{array}{ll}
g(i,1)g(i,2)\cdots g(i,b) &\text{if }1\leq i\leq a,\\
\kappa/\big(g(1,i-a)g(2,i-a)\cdots g(a,i-a)\big) &\text{if }a+1\leq i \leq a+b.\\
\end{array}\right.$$
\end{defn}

\begin{figure}
\centering
\begin{tikzpicture}[xscale=17/9, yscale=1.07]
\begin{scope}
\draw[blue, ultra thick] (-0.3, 1.7) -- (-0.7, 1.3);
\draw[ultra thick] (0.3, 1.7) -- (0.7, 1.3);
\draw[ultra thick] (-0.7, 0.7) -- (-0.3, 0.3);
\draw[red, ultra thick] (0.7, 0.7) -- (0.3, 0.3);
\draw[blue, ultra thick] (0.7, 2.7) -- (0.3, 2.3);
\draw[ultra thick] (1.3, 2.7) -- (1.7, 2.3);
\draw[red, ultra thick] (1.7, 1.7) -- (1.3, 1.3);
\node at (1,3) {${\color{blue}  z  }$};
\node at (0,2) {${\color{blue}  x  }$};
\node at (2,2) {${\color{red}  y  }$};
\node at (-1,1) {${\color{blue}  v  }$};
\node at (1,1) {${\color{red}  w  }$};
\node at (0,0) {${\color{red}  u  }$};
\node[below] at (0.5,-0.45) {$g$};
\node at (0.5,-1.5) {$\st_{g}=\left({\color{red}uwy},{\color{blue}vxz},\frac{\kappa}{uv}, \frac{\kappa}{wx},\frac{\kappa}{yz}\right)$};
\end{scope}
\begin{scope}[shift={(4,0)}]
\draw[blue, ultra thick] (-0.3, 1.7) -- (-0.7, 1.3);
\draw[ultra thick] (0.3, 1.7) -- (0.7, 1.3);
\draw[ultra thick] (-0.7, 0.7) -- (-0.3, 0.3);
\draw[red, ultra thick] (0.7, 0.7) -- (0.3, 0.3);
\draw[blue, ultra thick] (0.7, 2.7) -- (0.3, 2.3);
\draw[ultra thick] (1.3, 2.7) -- (1.7, 2.3);
\draw[red, ultra thick] (1.7, 1.7) -- (1.3, 1.3);
\node at (1,3) {${\color{blue}  \frac{xy}{x+y}  }$};
\node at (0,2) {${\color{blue}  \frac{vw(x+y)}{vx+wx+wy}  }$};
\node at (2,2) {${\color{red}  \frac{w(x+y)}{y}  }$};
\node at (-1,1) {${\color{blue}  \frac{u(vx+wx+wy)}{vx}  }$};
\node at (1,1) {${\color{red}  \frac{u(vx+wx+wy)}{w(x+y)}  }$};
\node at (0,0) {${\color{red}  \frac{\kappa}{u(vx+wx+wy)z}  }$};
\node[below] at (0.5,-0.45) {$\BAR(g)$};
\node at (0.5,-1.5) {$\st_{\BAR(g)}=\left({\color{red}\frac{\kappa}{yz}},{\color{blue}uwy},vxz,\frac{\kappa}{uv}, \frac{\kappa}{wx}\right)$};
\end{scope}
\begin{scope}[shift={(0,-5.5)}]
\draw[blue, ultra thick] (-0.3, 1.7) -- (-0.7, 1.3);
\draw[ultra thick] (0.3, 1.7) -- (0.7, 1.3);
\draw[ultra thick] (-0.7, 0.7) -- (-0.3, 0.3);
\draw[red, ultra thick] (0.7, 0.7) -- (0.3, 0.3);
\draw[blue, ultra thick] (0.7, 2.7) -- (0.3, 2.3);
\draw[ultra thick] (1.3, 2.7) -- (1.7, 2.3);
\draw[red, ultra thick] (1.7, 1.7) -- (1.3, 1.3);
\node at (1,3) {${\color{blue}  \frac{vw}{v+w}  }$};
\node at (0,2) {${\color{blue}  \frac{u(v+w)}{v}  }$};
\node at (2,2) {${\color{red}  \frac{u(v+w)}{w}  }$};
\node at (-1,1) {${\color{blue}  \frac{\kappa}{uwyz}  }$};
\node at (1,1) {${\color{red}  \frac{\kappa}{u(v+w)xz}  }$};
\node at (0,0) {${\color{red}  z  }$};
\node[below] at (0.5,-0.45) {$\BAR^2(g)$};
\node at (0.5,-1.5) {$\st_{\BAR^2(g)}=\left({\color{red}\frac{\kappa}{wx}},{\color{blue}\frac{\kappa}{yz}},uwy,vxz,\frac{\kappa}{uv}\right)$};
\end{scope}
\begin{scope}[shift={(4,-5.5)}]
\draw[blue, ultra thick] (-0.3, 1.7) -- (-0.7, 1.3);
\draw[ultra thick] (0.3, 1.7) -- (0.7, 1.3);
\draw[ultra thick] (-0.7, 0.7) -- (-0.3, 0.3);
\draw[red, ultra thick] (0.7, 0.7) -- (0.3, 0.3);
\draw[blue, ultra thick] (0.7, 2.7) -- (0.3, 2.3);
\draw[ultra thick] (1.3, 2.7) -- (1.7, 2.3);
\draw[red, ultra thick] (1.7, 1.7) -- (1.3, 1.3);
\node at (1,3) {${\color{blue}  u  }$};
\node at (0,2) {${\color{blue}  \frac{\kappa}{uw(x+y)z}  }$};
\node at (2,2) {${\color{red}  \frac{\kappa}{uvxz}  }$};
\node at (-1,1) {${\color{blue}  \frac{(x+y)z}{x}  }$};
\node at (1,1) {${\color{red}  \frac{(x+y)z}{y}  }$};
\node at (0,0) {${\color{red}  \frac{xy}{x+y}  }$};
\node[below] at (0.5,-0.45) {$\BAR^3(g)$};
\node at (0.5,-1.5) {$\st_{\BAR^3(g)}=\left({\color{red}\frac{\kappa}{uv}},{\color{blue}\frac{\kappa}{wx}},\frac{\kappa}{yz},uwy,vxz\right)$};
\end{scope}
\begin{scope}[shift={(0,-11)}]
\draw[blue, ultra thick] (-0.3, 1.7) -- (-0.7, 1.3);
\draw[ultra thick] (0.3, 1.7) -- (0.7, 1.3);
\draw[ultra thick] (-0.7, 0.7) -- (-0.3, 0.3);
\draw[red, ultra thick] (0.7, 0.7) -- (0.3, 0.3);
\draw[blue, ultra thick] (0.7, 2.7) -- (0.3, 2.3);
\draw[ultra thick] (1.3, 2.7) -- (1.7, 2.3);
\draw[red, ultra thick] (1.7, 1.7) -- (1.3, 1.3);
\node at (1,3) {${\color{blue}  \frac{\kappa}{u(vx+wx+wy)z}  }$};
\node at (0,2) {${\color{blue}  \frac{(vx+wx+wy)z}{(v+w)x}  }$};
\node at (2,2) {${\color{red}  \frac{(vx+wx+wy)z}{wy}  }$};
\node at (-1,1) {${\color{blue}  \frac{(v+w)x}{v}  }$};
\node at (1,1) {${\color{red}  \frac{(v+w)xy}{vx+wx+wy}  }$};
\node at (0,0) {${\color{red}  \frac{vw}{v+w}  }$};
\node[below] at (0.5,-0.45) {$\BAR^4(g)$};
\node at (0.5,-1.5) {$\st_{\BAR^4(g)}=\left({\color{red}vxz},{\color{blue}\frac{\kappa}{uv}},\frac{\kappa}{wx},\frac{\kappa}{yz},uwy\right)$};
\end{scope}
\begin{scope}[shift={(4,-11)}]
\draw[blue, ultra thick] (-0.3, 1.7) -- (-0.7, 1.3);
\draw[ultra thick] (0.3, 1.7) -- (0.7, 1.3);
\draw[ultra thick] (-0.7, 0.7) -- (-0.3, 0.3);
\draw[red, ultra thick] (0.7, 0.7) -- (0.3, 0.3);
\draw[blue, ultra thick] (0.7, 2.7) -- (0.3, 2.3);
\draw[ultra thick] (1.3, 2.7) -- (1.7, 2.3);
\draw[red, ultra thick] (1.7, 1.7) -- (1.3, 1.3);
\node at (1,3) {${\color{blue}  z  }$};
\node at (0,2) {${\color{blue}  x  }$};
\node at (2,2) {${\color{red}  y  }$};
\node at (-1,1) {${\color{blue}  v  }$};
\node at (1,1) {${\color{red}  w  }$};
\node at (0,0) {${\color{red}  u  }$};
\node[below] at (0.5,-0.45) {$\BAR^5(g)$};
\node at (0.5,-1.5) {$\st_{\BAR^5(g)}=\left({\color{red}uwy},{\color{blue}vxz},\frac{\kappa}{uv}, \frac{\kappa}{wx},\frac{\kappa}{yz}\right)$};
\end{scope}
\end{tikzpicture}
\caption{An orbit of $\BAR$ starting with a generic labeling $g\in\kk^P$, for $P=[2]\times[3]$.  The order of $\BAR$ on $P$ is $5=2+3$.  The ST word is listed below each labeling.  Note the cyclic shift property of Theorem~\ref{thm:st}.}
\label{fig:BAR[2]x[3]}
\end{figure}
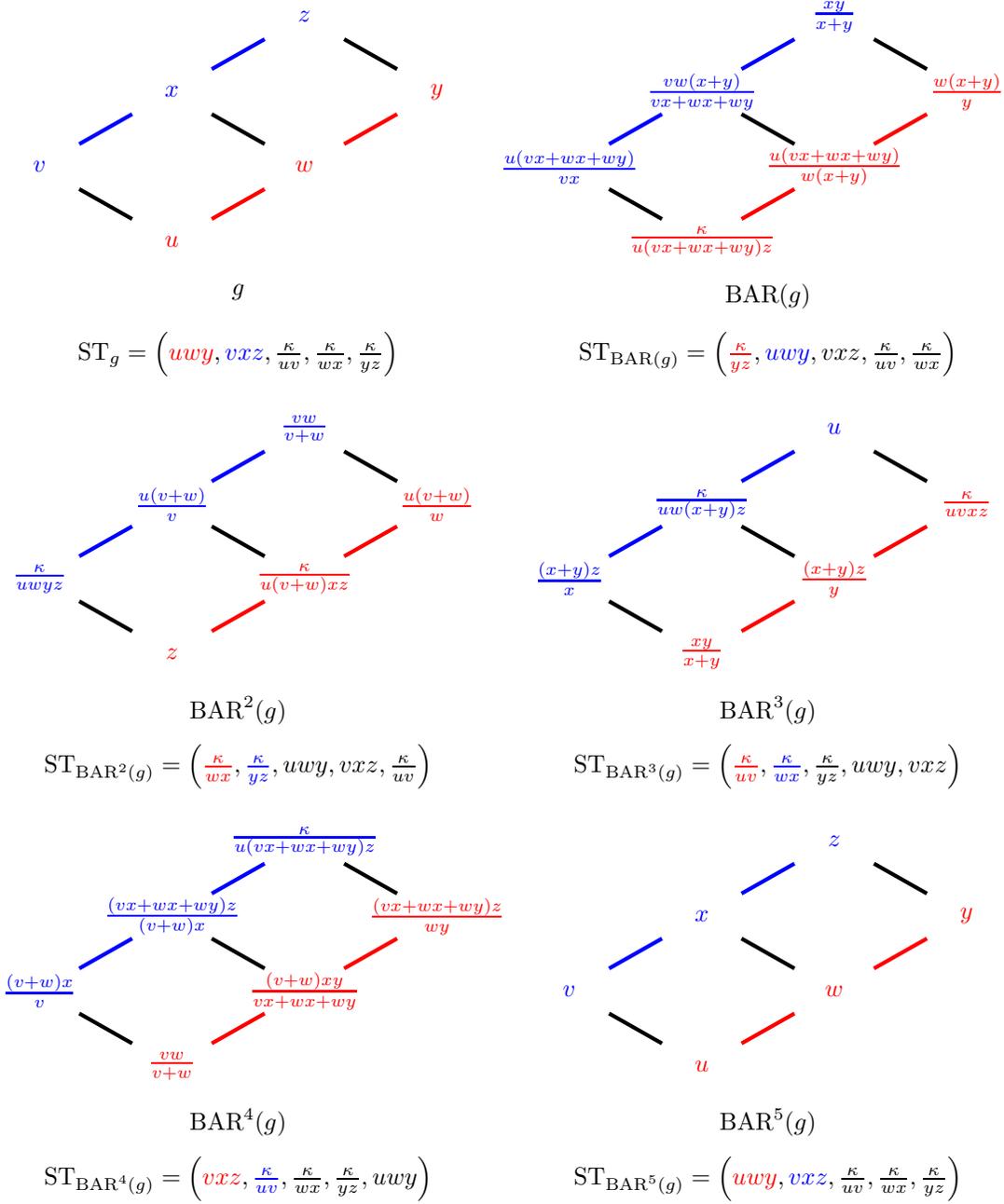

\begin{example}
Let $g$ be the generic labeling of $[2]\times[3]$ displayed in the top left corner of Figure~\ref{fig:BAR[2]x[3]}.  Then
$$\st_g = \big(\st_g(1), \st_g(2), \st_g(3), \st_g(4), \st_g(5)\big)=
\big(uwy, vxz, \kappa/(uv),\kappa/(wx),\kappa/(yz)\big).$$
After applying BAR-motion to $g$,
the Stanley--Thomas word of $\BAR(g)$ is
$$\st_{\BAR(g)}=\big(\kappa/(yz),uwy,vxz,\kappa/(uv),\kappa/(wx) \big)$$ which is simply a rightward cyclic shift
of $\st_g$.
This cyclic shift property is formalized in the following theorem.
\end{example}

\begin{thm}\label{thm:st}
Let $P=[a]\times[b]$.  For a labeling $g\in\kk^P$,
$$\st_{\BAR(g)}(i)=\st_g(i-1) \text{ for } 2\leq i\leq a+b \text{ and }\st_{\BAR(g)}(1)=\st_g(a+b).$$
Thus, $\st_{\BAR^{m}(g)}(i)=\st_g(i-m \bmod a+b)$ for every $i\in \zz$ and $m\in \nn$, where here ``$\bmod \;M$'' returns values within the set $\{1,2,\dots,M\}$. 
\end{thm}

This result follows from its noncommutative analogue (Theorem~\ref{thm:st-nar}), which we
prove in Section~\ref{sec:NClift}.  Any equality of expressions satisfied by a birational
map that does not contain subtraction or additive inverses
also holds in the
piecewise-linear realm (by tropicalization) and furthermore in the combinatorial realm (by
restriction); see~\cite[Remark~10]{GrRo16}.  So Theorem~\ref{thm:st} implies the analogous
statement for the piecewise-linear realm, and thus the already-known result for the
combinatorial realm.  By the end of this paper, we will have a chain of four realms (combinatorial, piecewise-linear,
birational, and noncommutative) and a proof in one realm implies the versions for all
previous realms.



Theorem~\ref{thm:st} is illustrated for $[2]\times[2]$ in Figure~\ref{fig:2x2-BAR} and for $[2]\times[3]$
in Figure~\ref{fig:BAR[2]x[3]}.
Unlike in the combinatorial realm, we cannot use
Theorem~\ref{thm:st} to prove that the order of
$\BAR$ on $[a]\times[b]$ is $a+b$ since the ST word does not uniquely define the labeling;
however, this has been proven by Grinberg and the second author~\cite{GrRo15}
(for birational \emph{order} rowmotion, which implies the order for $\BAR$; see~\cite[Example~3.7]{BAR-motion}).
We can still make use of the ST word to prove
a lifting of the fiber homomesy to the birational realm.
Corollary~\ref{cor:homomesy} has already been proven by Hopkins~\cite[Remark~4.44]{hopkins2019minuscule} using techniques that can be applied to a wider family of posets.
Here we obtain an alternative proof, illustrating that the same technique Propp and the second author used in the combinatorial realm lifts to the birational realm.
In the birational setting, addition has been replaced with multiplication, so a slightly modified definition of homomesy is used, to avoid the $n$th roots in geometric means.
\begin{defn}[{\cite[\S2.1]{einpropp}}]
Let $\cals$ be a collection of combinatorial objects, and $f:\cals \rightarrow \kk$ a
``statistic'' (any map) on $\cals$.
Suppose $\varphi :\cals \ra \cals$ is a map and there exists a positive integer $n$ for
which $\varphi^n$ is the identity on $\cals $. 
Then we say \textbf{$f$ exhibits multiplicative homomesy with respect to $\varphi $} if
$$f(x)f\big(\varphi(x)\big)f\big(\varphi^2(x)\big) \cdots f\big(\varphi^{n-1}(x)\big)$$
is constant, independent of the choice of $x\in \cals $.
\end{defn}

The following follows easily from Theorem~\ref{thm:st}.

\begin{cor}\label{cor:homomesy}
Let $P=[a]\times [b]$, and fix $k \in [a]$ and $\ell \in [b]$. 
Then the  product of labels along the $k$th positive fiber,
$g\mapsto g(k,1) g(k,2) \cdots g(k,b)$,  
exhibits multiplicative homomesy with respect to $\BAR$ because
for each $g\in \kk^{P}$ we have
$$\prod\limits_{m=0}^{a+b-1} (\BAR^m g)(k,1) (\BAR^m g)(k,2) \cdots (\BAR^m g)(k,b)=\kappa^b.$$
Similarly, the product of labels along the $\ell$th negative fiber,
$g\mapsto g(1,\ell) g(2,\ell) \cdots g(a,\ell)$, exhibits multiplicative homomesy because
$$\prod\limits_{m=0}^{a+b-1} (\BAR^m g)(1,\ell) (\BAR^m g)(2,\ell) \cdots (\BAR^m g)(a,\ell)=\kappa^a.$$
\end{cor}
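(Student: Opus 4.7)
The plan is to reduce the statement to a single direct computation of the product of all entries of the Stanley--Thomas word of $g$, using Theorem~\ref{thm:st} to convert a product along an orbit into a product of $\st_g$ entries.

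First, fix $k \in [a]$. By Theorem~\ref{thm:st}, applying $\BAR$ cyclically shifts the ST word, so the multiset $\{\st_{\BAR^m g}(k) : 0 \leq m \leq a+b-1\}$ coincides with the multiset $\{\st_g(i) : 1 \leq i \leq a+b\}$. Since multiplication in $\kk$ is commutative, this gives
$$\prod_{m=0}^{a+b-1} \st_{\BAR^m g}(k) \;=\; \prod_{i=1}^{a+b} \st_g(i).$$
The same identity holds with $k$ replaced by $a+\ell$ for any $\ell \in [b]$.

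Second, evaluate the right-hand side using Definition~\ref{def:STword}. The entries $\st_g(1),\ldots,\st_g(a)$ together contribute $\prod_{i=1}^a \prod_{j=1}^b g(i,j)$, i.e.\ each label of the poset exactly once in the numerator. The entries $\st_g(a+1),\ldots,\st_g(a+b)$ contribute $\prod_{\ell=1}^b C/\bigl(g(1,\ell)g(2,\ell)\cdots g(a,\ell)\bigr)$, which is $C^b$ divided by each label exactly once. Multiplying, all labels cancel and we obtain
$$\prod_{i=1}^{a+b} \st_g(i) \;=\; C^b.$$

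Finally, relate this back to fiber products. For the positive fiber statement, note $\st_g(k) = g(k,1)g(k,2)\cdots g(k,b)$ by definition, so the asserted identity $\prod_{m=0}^{a+b-1} (\BAR^m g)(k,1)\cdots(\BAR^m g)(k,b) = C^b$ is immediate from the two displays above. For the negative fiber, observe that $g(1,\ell)g(2,\ell)\cdots g(a,\ell) = C/\st_g(a+\ell)$, so
$$\prod_{m=0}^{a+b-1}(\BAR^m g)(1,\ell)\cdots(\BAR^m g)(a,\ell) \;=\; \prod_{m=0}^{a+b-1}\frac{C}{\st_{\BAR^m g}(a+\ell)} \;=\; \frac{C^{a+b}}{C^b} \;=\; C^a.$$

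There is essentially no obstacle here beyond invoking Theorem~\ref{thm:st}; the entire content of the corollary is that the product of all $(a+b)$ entries of the ST word telescopes to $C^b$, and the cyclic-shift theorem promotes this single identity into an orbit-product identity. The only mild care needed is to distinguish which fiber products equal $\st_g$ entries directly (positive fibers) and which equal $C$ divided by an $\st_g$ entry (negative fibers), which accounts for the asymmetry between $C^b$ and $C^a$ in the two statements.
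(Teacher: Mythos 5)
Your proposal is correct and takes essentially the same approach as the paper: both use Theorem~\ref{thm:st} to rewrite the orbit product as a product over all $a+b$ entries of $\st_g$, then observe that in that product each label of $g$ appears exactly once in the numerator and once in the denominator (with $b$, resp.\ $a$, factors of $C$ surviving). Your presentation economizes slightly by computing $\prod_{i=1}^{a+b}\st_g(i)=C^b$ once and deriving the negative-fiber case via $C^{a+b}/C^b$, whereas the paper carries out the cancellation separately for each case, but this is a cosmetic difference rather than a different route.
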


\begin{proof}
First we extend Definition~\ref{def:STword} by setting $\ST_{g}(j)=\ST_{g}(i)$ whenever $j\equiv
i \pmod{a+b}$.  Then by Theorem~\ref{thm:st}, $\ST_{\BAR^{m}g} (i\bmod a+b) = \ST_{g}
(i-m\bmod a+b)$ for any $i\in \zz$.  

Fix $k \in [a]$ and $\ell \in [b]$.
Then
\begin{align*}
\prod\limits_{m=0}^{a+b-1} (\BAR^m g)(k,1) (\BAR^m g)(k,2) \cdots (\BAR^m g)(k,b)
&=
\prod\limits_{m=0}^{a+b-1}\ST_{\BAR^m g}(k)\\
= \prod\limits_{m=0}^{a+b-1}\ST_{g}(k-m \bmod a+b)
&=
\prod_{r=1}^{a+b}\ST_{g}(r)= \kappa^{b}.
\end{align*}
The second equality above is from Theorem~\ref{thm:st}, and the last equality is from the definition
of $\ST_{g}$ since
each element label appears once as a factor of $\prod_{r=1}^{a+b}\ST_{g}(r)$, and once again reciprocated by
$\kappa$.
Now working along negative fibers we similarly obtain
\begin{align*}
\prod\limits_{m=0}^{a+b-1} (\BAR^m g)(1,\ell) (\BAR^m g)(2,\ell) \cdots (\BAR^m g)(a,\ell)
&=
\prod\limits_{m=0}^{a+b-1}\frac{\kappa}{\ST_{\BAR^m g}(\ell+a)}\\
= \prod\limits_{m=0}^{a+b-1}\frac{\kappa}{\ST_{g}(\ell+a-m \bmod a+b)}
&=
\prod_{r=1}^{a+b}\frac{\kappa}{\ST_{g}(r)}= \kappa^{a}.
\end{align*}
\end{proof}

\begin{example}
Consider the product $g \mapsto g(1,1)g(1,2)$ displayed in {\color{red}red} in Figure~\ref{fig:2x2-BAR}.
The product of this statistic, ranging across the entire orbit is
$$\left(w\cdot y\right)\cdot \left(\frac{\kappa}{w(x+y)z}\cdot \frac{w(x+y)}{y}\right) \cdot
\left(z\cdot \frac{\kappa}{wxz}\right) \cdot \left(\frac{xy}{x+y}\cdot \frac{(x+y)z}{y}\right) = \kappa^2.$$
\end{example}

\section{Lifting to the noncommutative realm}\label{sec:NClift}

\subsection{Introduction to noncommutative dynamics}
Our main result, Theorem~\ref{thm:st}, lifts to the noncommutative realm,
where we replace our field $\kk$ with a skew field\footnote{Here a \emph{skew field} or \emph{division ring} is a ring with 1 in which every nonzero element has a multiplicative inverse.  Multiplication in a skew field must be associative, but need not be commutative (the only field axiom not required).}
$\bbs$ of characteristic zero.
This realm was first considered in unpublished work by Grinberg who conjectured that the
periodicity of birational rowmotion on $[a]\times[b]$ (i.e., the order of this map is $a+b$) 
holds even when we consider noncommuting variables.
In earlier work we expanded this by defining the transfer maps and antichain rowmotion in this realm~\cite[\S5]{BAR-motion}.
We always require the generic constant $\kappa\in \bbs$ to be in the
\emph{center} of $\bbs$ (i.e., $\kappa$ commutes with every element of $\bbs$).
We use the term \textbf{partial map} to describe the analogue of a birational map
over skew fields~\cite[Remark~5.5]{BAR-motion}.

\begin{notation}
For greater ease in writing and interpreting rational expressions in the skew field, we write $\overline{x}$ for $x^{-1}$ when $x\in\bbs$.
Also, we use $\prod^\nearrow$ to indicate the indices increase from left to right, and $\prod^\searrow$ to indicate the indices decrease from left to right. 
For example ${\prod\limits_{n=2}^5} ^\nearrow f(n)=f(2)f(3)f(4)f(5)$
while
${\prod\limits_{n=2}^5} ^\searrow f(n)=f(5)f(4)f(3)f(2)$.
\end{notation}

\begin{defn}[{\cite[Definition~5.11]{BAR-motion}}]\label{def:NCtransf}
We define the following noncommutative generalizations of the birational maps of Definition~\ref{def:bir-trans}.
For all $f\in\mathbb{S}^{P}$ and $x\in P$, we
set:
\begin{align*}
    (\Theta f)(x) &= \kappa \cdot \overline{f(x)},\\
    (\down f)(x) &= f(x) \cdot \overline{\sum\limits_{y\lessdot x}f(y)} \;\;\cblu{\big(\text{with } f\big(\widehat{0}\big)=1\big)},\\
    (\up f)(x) &= 
    \overline{\sum\limits_{y\gtrdot x}f(y)} \cdot f(x) \;\; \cblu{\big(\text{with } f\big(\widehat{1}\big)=1\big)},\\
    \left(\down^{-1}f\right)(x) &=
    \sum_{\widehat{0}\lessdot y_1\lessdot y_2 \lessdot \cdots \lessdot y_k=x} \kern -25pt f(y_k)\cdots f(y_2)f(y_1)  
        =f(x) \cdot \sum\limits_{y\lessdot x}\left(\down^{-1}f\right)(y)\;\; \cblu{\big(\text{with } (\down^{-1}f)\big(\widehat{0}\big)=1\big)},\\
    \left(\up^{-1}f\right)(x) &=
    \sum_{x=y_1\lessdot y_2 \lessdot \cdots \lessdot y_k\lessdot \widehat{1}} \kern -25pt  f(y_k)\cdots f(y_2)f(y_1)
          =\sum\limits_{y\gtrdot x}\left(\up^{-1}f\right)(y) \cdot f(x)\;\; \cblu{\big(\text{with } (\up^{-1}f)\big(\widehat{1}\big)=1\big)}.
\end{align*}
\end{defn}

{\bf Noncommutative antichain rowmotion (NAR-motion)} is the partial map $\NAR: \bbs^P \dra  \bbs^P$
given by $\NAR=\down\circ \Theta \circ \up^{-1}$.
However, an equivalent description of NAR-motion (that we will use here in a proof) is in terms of partial maps called toggles.
Toggles have been studied in connection with order-ideal rowmotion since the work of Cameron and Fon-Der-Flaass~\cite{cameronfonder}.
The toggles we will work with here are from combinatorial antichain toggles first
described by Striker~\cite{strikergentog}, then lifted by us to the higher realms
~\cite{antichain-toggling,BAR-motion}.  

\clearpage
\begin{defn}[{\cite[Definition~5.13,~Lemma~5.19]{BAR-motion}}]
Let $v\in P$.  The \textbf{noncommutative antichain toggle} is the partial map $\tau_v:\bbs^P \dra \bbs^P$ defined as follows:

\begin{align*}\big(\tau_v(g)\big)(x) &= 
\left\{\begin{array}{ll}
\kappa \cdot \overline{
\ds \sum_{\widehat{0}\lessdot y_1\lessdot y_2
\lessdot
\cdots \lessdot y_k\lessdot \widehat{1}, \; y_c=v}{\prod\limits_{i=1}^{c-1}} ^\searrow g(y_i) \cdot
{\prod\limits_{i=c}^{k}} ^\searrow g(y_i)
} &\text{if $x=v$}\\\vspace{-4.5pt}\\
g(x) &\text{if $x\not=v$}
\end{array}
\right.\\
&=
\left\{
\begin{array}{ll}
\kappa \cdot \overline{(\up^{-1}g)(v)} \cdot \overline{(\down^{-1}g)(v)}\cdot g(v)
 &\text{if $x=v$}\\\vspace{-4.5pt}\\
g(x) &\text{if $x\not=v$.}
\end{array}
\right.
\end{align*}
\end{defn}

Let $(x_1,x_2,\dots,x_n)$ be any linear extension of a finite poset $P$.  Then $\NAR$ can be equivalently defined as $\NAR=\tau_{x_n}\cdots \tau_{x_2} \tau_{x_1}$, i.e., toggling at each element of $P$
from bottom to top; the equivalence is proved in~\cite[Theorem 5.26]{BAR-motion}.
We will make use of both the transfer map and toggle descriptions of $\NAR$.

\subsection{Noncommutative Stanley--Thomas word}

\begin{figure}
\centering
\begin{tikzpicture}[xscale=17/9, yscale=15/18]
\begin{scope}
\draw[thick] (0.3, 1.7) -- (0.7, 1.3);
\draw[thick] (0.7, 2.7) -- (0.3, 2.3);
\draw[thick] (1.3, 2.7) -- (1.7, 2.3);
\draw[thick] (1.7, 1.7) -- (1.3, 1.3);
\node at (1,3) {$  z  $};
\node at (0,2) {$  x  $};
\node at (2,2) {$  y  $};
\node at (1,1) {$  w  $};
\node[below] at (1,0.55) {$g=\NAR^4(g)$};
\node at (1,-0.5) {$\st_{g}=\left(yw, zx,
\kappa \cdot\overline{w}\cdot\overline{x}, \kappa \cdot\overline{y}\cdot\overline{z} \right)$};
\end{scope}
\begin{scope}[shift={(4.4,0)}]
\draw[thick] (0.3, 1.7) -- (0.7, 1.3);
\draw[thick] (0.7, 2.7) -- (0.3, 2.3);
\draw[thick] (1.3, 2.7) -- (1.7, 2.3);
\draw[thick] (1.7, 1.7) -- (1.3, 1.3);
\node at (1,3) {$  \overline{\big( \overline{x}+\overline{y} \big)}  $};
\node at (0,2) {$  \overline{x} \cdot (x+y) \cdot w  $};
\node at (2,2) {$  \overline{y} \cdot (x+y) \cdot w  $};
\node at (1,1) {$  \kappa \cdot \overline{w} \cdot \overline{(x+y)} \cdot \overline{z}  $};
\node[below] at (1,0.55) {$\NAR(g)$};
\node at (1,-0.5) {$\st_{\NAR(g)}=\left(\kappa \cdot\overline{y}\cdot\overline{z},
yw, zx,
\kappa \cdot\overline{w}\cdot\overline{x} \right)$};
\end{scope}
\begin{scope}[shift={(0,-4.3)}]
\draw[thick] (0.3, 1.7) -- (0.7, 1.3);
\draw[thick] (0.7, 2.7) -- (0.3, 2.3);
\draw[thick] (1.3, 2.7) -- (1.7, 2.3);
\draw[thick] (1.7, 1.7) -- (1.3, 1.3);
\node at (1,3) {$  w  $};
\node at (0,2) {$  \kappa \cdot \overline{w} \cdot \overline{y} \cdot \overline{z}  $};
\node at (2,2) {$  \kappa \cdot \overline{w} \cdot \overline{x} \cdot \overline{z}  $};
\node at (1,1) {$  z  $};
\node[below] at (1,0.55) {$\NAR^2(g)$};
\node at (1,-0.5) {$\st_{\NAR^2(g)}=\left(\kappa \cdot\overline{w}\cdot\overline{x},
\kappa \cdot\overline{y}\cdot\overline{z}, yw, zx
 \right)$};
\end{scope}
\draw (-.7,-.9) -- (7, -.9);
\begin{scope}[shift={(4.4,-4.3)}]
\draw[thick] (0.3, 1.7) -- (0.7, 1.3);
\draw[thick] (0.7, 2.7) -- (0.3, 2.3);
\draw[thick] (1.3, 2.7) -- (1.7, 2.3);
\draw[thick] (1.7, 1.7) -- (1.3, 1.3);
\node at (1,3) {$  \kappa \cdot \overline{w} \cdot \overline{(x+y)} \cdot \overline{z}  $};
\node at (0,2) {$  z\cdot(x+y)\cdot \overline{x}  $};
\node at (2,2) {$  z\cdot(x+y)\cdot \overline{y}  $};
\node at (1,1) {$  \overline{\big( \overline{x}+\overline{y} \big)}  $};
\node[below] at (1,0.55) {$\NAR^3(g)$};
\node at (1,-0.5) {$\st_{\NAR^3(g)}=\left(zx, \kappa \cdot\overline{w}\cdot\overline{x},
\kappa \cdot\overline{y}\cdot\overline{z}, yw
 \right)$};
\end{scope}
\end{tikzpicture}
\caption{The NAR-orbit for the generic labeling on $P=[2]\times[2]$.  The order of $\NAR$ is 4.  Below each labeling is its Stanley--Thomas word, illustrating Theorem~\ref{thm:st-nar}.}
\label{fig:NAR[2]x[2]-orbit}
\end{figure}
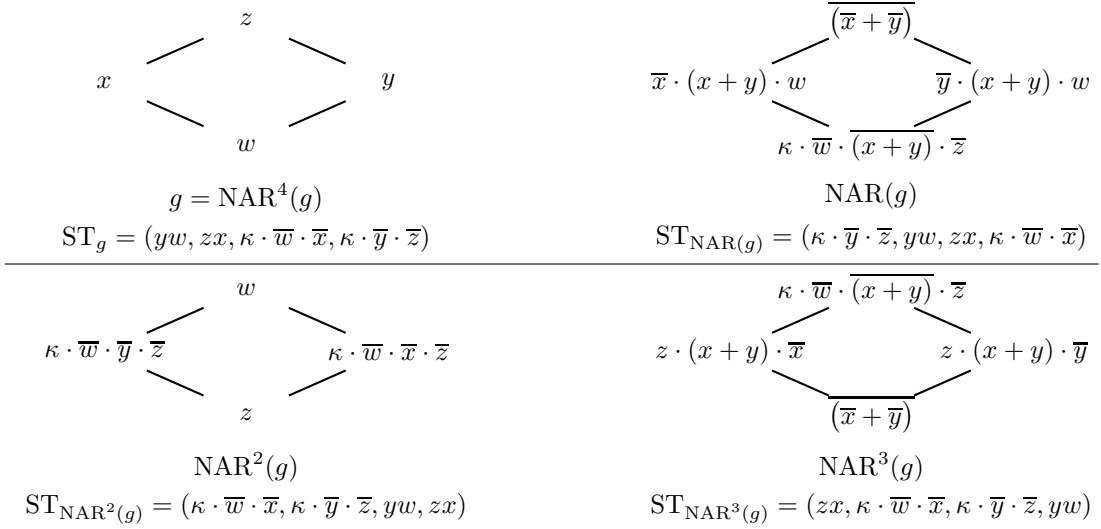

\begin{defn}
Let $a,b\in\zz_{>0}$, $P=[a]\times[b]$, and $g\in\bbs^P$ where $\bbs$ is a skew field.  The \textbf{Stanley--Thomas word} $\st_g$ is the $(a+b)$-tuple
given by
$$\st_g(i)=\left\{\begin{array}{ll}
g(i,b)\cdot g(i,b-1) \cdots g(i,1) &\text{if }1\leq i\leq a,\\
\kappa \cdot \overline{g(1,i-a)} \cdot \overline{g(2,i-a)} \cdots \overline{g(a,i-a)}
&\text{if }a+1\leq i \leq a+b.\\
\end{array}\right.$$
\end{defn}

\begin{thm}\label{thm:st-nar}
Let $P=[a]\times[b]$.  For a labeling $g\in\bbs^P$,
\[
\st_{\NAR(g)}(i)=\st_g(i-1) \text{ for } 2\leq i\leq a+b \text{ and } \st_{\NAR(g)}(1)=\st_g(a+b).
\]
\end{thm}

\begin{example}\label{eg:NAR2x2}
Figure~\ref{fig:NAR[2]x[2]-orbit} shows the generic NAR-orbit for $P=[2]\times[2]$.  Observe that NAR cyclically shifts the Stanley--Thomas word.  We remind the reader that $\kappa$ commutes with every element of $\bbs$, but
in general simplifications in skew fields can be rather tricky. For example $\overline{\overline{x} + \overline{y}}$ can equivalently be written as
\begin{itemize}
    \item $y\overline{(x+y)}x$ by multiplying on the left by $y\overline{y}$ and the right by $\overline{x}x$
    and using the property $\overline{AB}=\overline{B}\cdot
    \overline{A}$,
    \item or as $x\overline{(x+y)}y$ by multiplying on the left by $x\overline{x}$ and the right by $\overline{y}y$,
\end{itemize}
but is {\bf not equivalent} to
$yx\overline{(x+y)}$, $\overline{(x+y)}xy$, $xy\overline{(x+y)}$, or $\overline{(x+y)}yx$. Such identities are necessary even to check the equality $\st_{\NAR(g)}(2)=yw$.   
\end{example}

The following is used in the proof of Theorem~\ref{thm:st-nar}. 

\begin{thm}\label{thm:describe-nar-axb}
Let $P=[a]\times[b]$ and $g\in \bbs^P$.
Let $G=\up^{-1}g$.
Then
\begin{itemize}
    \item $(\NAR g)(1,1) = \kappa \cdot\overline{G(1,1)}$,
    \item $(\NAR g)(1,j) = \overline{G(1,j)} \cdot {G(1,j-1)}$
    for $j\geq 2$,
    \item $(\NAR g)(i,1) = \overline{G(i,1)} \cdot{G(i-1,1)}$
    for $i\geq 2$,
    \item $(\NAR g)(i,j)= 
    \overline{G(i,j)} \cdot G(i-1,j) \cdot g(i-1,j-1)
    \cdot \overline{G(i-1,j-1)} \cdot G(i,j-1)$\\
    \phantom{11111111111l}
    $=\overline{G(i,j)} \cdot G(i,j-1) \cdot g(i-1,j-1)
    \cdot \overline{G(i-1,j-1)} \cdot G(i-1,j)$
    \\for $i,j\geq 2$.
\end{itemize}
\end{thm}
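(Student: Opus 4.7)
The plan is a direct computation from the definition $\NAR = \down \circ \Theta \circ \up^{-1}$. Throughout, write $f := \up^{-1}g$, so that $(\Theta f)(x) = C\cdot\overline{f(x)}$ and $(\NAR g)(x) = (\down \Theta f)(x) = (\Theta f)(x)\cdot\overline{\sum_{y\lessdot x}(\Theta f)(y)}$, with the convention $f(\widehat 0)=1$. The four bullets correspond to the four kinds of elements in $[a]\times[b]$ according to which covers they have below them.

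For the minimum $(1,1)$, the only cover below in $\widehat P$ is $\widehat 0$, so $(\down\Theta f)(1,1) = C\cdot\overline{f(1,1)}\cdot\overline{1}$, giving the first bullet. For $(1,j)$ with $j\geq 2$, the only cover below is $(1,j-1)$, so
\[(\down\Theta f)(1,j) = C\cdot\overline{f(1,j)}\cdot\overline{C\cdot\overline{f(1,j-1)}}.\]
Since $C$ is central, we have $\overline{C\cdot A} = \overline{A}\cdot\overline{C}$ and $C\cdot\overline{C}=1$, so this collapses to $\overline{f(1,j)}\cdot f(1,j-1)$, proving the second bullet. The third bullet is symmetric.

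The interesting case is $(i,j)$ with $i,j\geq 2$, whose covers below are $(i-1,j)$ and $(i,j-1)$. Expanding and using centrality of $C$,
\[(\down\Theta f)(i,j) = C\cdot\overline{f(i,j)}\cdot\overline{C\bigl(\overline{f(i-1,j)}+\overline{f(i,j-1)}\bigr)}
= \overline{f(i,j)}\cdot\overline{\overline{f(i-1,j)}+\overline{f(i,j-1)}}.\]
At this point I would invoke the identity $\overline{\overline{x}+\overline{y}}=x\overline{(x+y)}y = y\overline{(x+y)}x$ recalled in Example~\ref{eg:NAR2x2} (verified by multiplying by $x\overline{x}=1$ or $y\overline{y}=1$ and using $\overline{AB}=\overline B\cdot\overline A$), applied with $x=f(i-1,j)$, $y=f(i,j-1)$. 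This yields either
\[\overline{f(i,j)}\cdot f(i-1,j)\cdot\overline{f(i-1,j)+f(i,j-1)}\cdot f(i,j-1)\]
or the same expression with $f(i-1,j)$ and $f(i,j-1)$ swapped at the outer positions.

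Finally, I would use the recurrence built into $\up^{-1}$: since $(i-1,j-1)$ has covers $(i,j-1)$ and $(i-1,j)$ above it (this is why both $i,j\geq 2$ is needed), the definition of $\up^{-1}$ gives $f(i-1,j-1) = \bigl[f(i,j-1)+f(i-1,j)\bigr]\cdot g(i-1,j-1)$, hence $\overline{f(i-1,j)+f(i,j-1)} = g(i-1,j-1)\cdot\overline{f(i-1,j-1)}$. Substituting yields the two claimed expressions for $(\NAR g)(i,j)$.

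The main obstacle is purely bookkeeping in the skew field: one must keep the order of factors straight when inverting sums, apply the $\overline{\overline x+\overline y}$ identity in the correct orientation to match the desired form of the answer, and invoke the $\up^{-1}$ recurrence on the right-hand side (since $g(i-1,j-1)$ appears as the rightmost factor). Once these are handled carefully, the computation is mechanical.
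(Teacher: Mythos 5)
Your proposal is correct, and it takes a genuinely different---and in fact cleaner---route than the paper. The paper proves Theorem~\ref{thm:describe-nar-axb} by induction along a linear extension, using the toggle description $\NAR = \tau_{x_n}\cdots\tau_{x_1}$: at each step one tracks the partially-toggled labeling $h$, argues that $(\down^{-1}h)(v) = (\down^{-1}\NAR g)(v) = (\Theta\up^{-1}g)(v)$ for $v < (i,j)$, and substitutes into the toggle formula. Your approach bypasses all of that by applying the compositional definition $\NAR = \down\circ\Theta\circ\up^{-1}$ directly, which immediately gives $(\NAR g)(x) = (\Theta f)(x)\cdot\overline{\sum_{y\lessdot x}(\Theta f)(y)}$ with $f=\up^{-1}g$; the centrality of $C$ then cancels the stray $C$'s with no induction or case-tracking required. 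Both routes arrive at the same key equation (the paper's Equation~\ref{eqn:NARij}) and finish identically, invoking the skew-field identity $\overline{\overline{x}+\overline{y}}=x\overline{(x+y)}y=y\overline{(x+y)}x$ and the recurrence $f(i-1,j-1)=\bigl(f(i-1,j)+f(i,j-1)\bigr)g(i-1,j-1)$ built into $\up^{-1}$. What the paper's toggle-based argument buys is a demonstration of how the toggle machinery from~\cite{BAR-motion} interacts concretely with the transfer maps on $[a]\times[b]$, which is thematically consistent with the rest of that section; what your direct computation buys is brevity, no case split on $i=1$ versus $j=1$ except in the trivial enumeration of lower covers, and no need for the intermediate labeling $h$ or the inductive hypothesis at all.
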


\begin{remark}\label{rem:reduce-formulas}
In Theorem~\ref{thm:describe-nar-axb},
we can also use the formulas in the fourth bullet point when $i=1$ and/or $j=1$ if we define
$G(i,j) = 1$
when $i=0$ and/or $j=0$, $g(0,0)=\kappa$, and
$g(i,j)=1$ when one of $i,j$ (but not both) are 0.
However, there are other ways to define $g(i,j)$ and $G(i,j)$ in these out-of-bounds positions that would also serve this same purpose of reducing the number of different formulas.
We will not choose a convention here, in case
a specific one makes the most sense in further study of NAR-motion.
\end{remark}

We will actually prove
the following theorem which is more general than Theorem~\ref{thm:describe-nar-axb} and applies to a wider range of posets.

\begin{thm}~\label{thm:describe-nar-axb-more-general}
Let $P$ be a finite poset, and let
$g\in\bbs^P$.  Let $G=\up^{-1}g$.  Let $x\in P$.
\begin{enumerate}
    \item If $x$ is a minimal element of $P$, then
    $(\NAR g)(x)=\kappa \cdot \overline{G(x)}$.
    \item If $x$ covers exactly one element $z\in P$, then
    $(\NAR g)(x)=\overline{G(x)}\cdot G(z)$.
    \item If $u,v,z\in P$ satisfy $u\not=v$ and
    $\{y\in P: y\lessdot x\} = \{y\in P: y\gtrdot z\} = \{u,v\}$,
    then
    $$(\NAR g)(x)=\overline{G(x)}\cdot G(u)
    \cdot g(z) \cdot \overline{G(z)} \cdot G(v).$$
\end{enumerate}
\end{thm}

\begin{proof}
Let $H=(\Theta\circ\up^{-1})(g)$.
So $\NAR g = \down H$, and for each $e\in P$, $H(e)=\kappa\cdot \overline{G(e)}$.

We begin with (1).  Since $x$ is a minimal element of $P$, $(\NAR g)(x)=(\down H)(x)=H(x)
=\kappa\cdot \overline{G(x)}$.

For (2), we have
\begin{align*}
    (\NAR g)(x)&= (\down H)(x)\\
    &= H(x) \cdot \overline{H(z)}\\
    &= \kappa\cdot \overline{G(x)} \cdot \overline{\kappa\cdot \overline{G(z)}}\\
    &= \overline{G(x)}\cdot G(z).
\end{align*}

Now we consider (3).
From the definition of $\up^{-1}$, we have $G(z)=(G(u)+G(v))g(z)$.  This is used in the penultimate equality below:
\begin{align*}
    (\NAR g)(x)&= (\down H)(x)\\
    &= H(x)\cdot\big(\overline{H(u)+H(v)}\big)\\
    &= \kappa\cdot \overline{G(x)}\cdot
    \big(\overline{ \kappa\cdot \overline{G(u)} +
    \kappa\cdot \overline{G(v)} }\big)\\
    &=\overline{G(x)}\cdot \big(\overline{ \overline{G(u)} + \overline{G(v)} }\big)\\
    &= \overline{G(x)}\cdot G(u) \cdot \big(\overline{G(u)+G(v)}\big) \cdot G(v)\\
    &= \overline{G(x)}\cdot G(u) \cdot \big(\overline{G(z)\cdot \overline{g(z)}}\big) \cdot G(v)\\
    &= \overline{G(x)}\cdot G(u)
    \cdot g(z) \cdot \overline{G(z)} \cdot G(v).
\end{align*}
The identity $\overline{\overline{x} + \overline{y}} = x\overline{(x+y)}y$ from Example~\ref{eg:NAR2x2} was used in the fifth equality.
\end{proof}

Now we will use Theorem~\ref{thm:describe-nar-axb} to prove Theorem~\ref{thm:st-nar}.

\begin{proof}[of Theorem~\ref{thm:st-nar}]
The theorem follows easily from the following four equalities:

\begin{enumerate}
    \item ${\prod\limits_{\ell=1}^b}^\searrow (\NAR g)(1,\ell) = \kappa\cdot{\prod\limits_{k=1}^a}^\nearrow \overline{g(k,b)}$,
    \item ${\prod\limits_{\ell=1}^b}^\searrow (\NAR g)(k,\ell) = {\prod\limits_{\ell=1}^b}^\searrow g(k-1,\ell)$ for $2\leq k\leq a$,
    \item ${\prod\limits_{k=1}^a}^\searrow (\NAR g)(k,1) = \kappa\cdot{\prod\limits_{\ell=1}^b}^\nearrow \overline{g(a,\ell)}$,
    \item ${\prod\limits_{k=1}^a}^\searrow (\NAR g)(k,\ell) = {\prod\limits_{k=1}^a}^\searrow g(k,\ell-1)$ for $2\leq \ell\leq b$.
\end{enumerate}
We will prove only the first two, since the last two are respectively analogous but with the coordinates reversed.
Let $G=\up^{-1}g$.
For the first equality we apply Theorem~\ref{thm:describe-nar-axb} and unravel the resulting telescoping
product to obtain 
\begin{align*}
    {\prod\limits_{\ell=1}^b}^\searrow (\NAR g)(1,\ell) &=
    \left({\prod\limits_{\ell=2}^b}^\searrow
    \overline{G(1,\ell)}\cdot G(1,\ell-1)\right)
    \cdot \kappa \cdot \overline{G(1,1)}
    =\kappa\cdot\overline{G(1,b)}\\
    &=\kappa\cdot\overline{{\prod\limits_{k=1}^a}^\searrow g(k,b)}
    =\kappa\cdot{\prod\limits_{k=1}^a}^\nearrow \overline{g(k,b)}.
\end{align*}
The third equality above is because there is only one saturated chain from $(1,b)$ to $\widehat{1}$.
Now for the second equality, thanks to Theorem~\ref{thm:describe-nar-axb} and a telescoping product we have
\begin{align*}
    &\phantom{=\;}{\prod\limits_{\ell=1}^b}^\searrow (\NAR g)(k,\ell)\\
    &=
    \left({\prod\limits_{\ell=2}^b}^\searrow
    \overline{G(k,\ell)} \cdot G(k-1,\ell) \cdot g(k-1,\ell-1)\cdot \overline{G(k-1,\ell-1)} \cdot G(k,\ell-1) \right)\\
    &\phantom{=\;}\cdot \overline{G(k,1)} \cdot G(k-1,1)\\
    &= \overline{G(k,b)}\cdot G(k-1,b) \cdot {\prod\limits_{\ell=2}^b}^\searrow g(k-1,\ell-1)\\
    &= g(k-1,b) \cdot {\prod\limits_{\ell=1}^{b-1}}^\searrow g(k-1,\ell)\\
    &= {\prod\limits_{\ell=1}^b}^\searrow g(k-1,\ell)
\end{align*}
where the third equality follows from the fact that $g=\up G$ and a reindexing of the product.
\end{proof}

\section{Future directions}\label{sec:future}
There are several directions for future research building off of this work.
First of all, the description of how $\NAR$ acts on each of the labels (Theorem~\ref{thm:describe-nar-axb}) may be one piece of the puzzle
in proving Grinberg's conjecture that $\NAR$ has order $a+b$
on the poset $[a]\times[b]$.

The cyclic shifting action on the Stanley--Thomas word has order $a+b$, but this does not prove periodicity of $\NAR$ as
$g\mapsto \ST_g$ is not injective.
However, what we have established in this work is \emph{resonance} as defined by Dilks, Pechenik, and
Striker~\cite{dpsresonance,dilks-striker-vorland}.
We believe there are more examples
of resonance to be found lurking in the birational (or noncommutative) realm.
A direction for further study is to see if the resonance on various posets, such as products $[a]\times[b]\times[c]$ of three chains
considered in~\cite{dpsresonance} generalizes to the birational realm.
The birational realm may also prove useful in answering
unsolved resonance conjectures
in the combinatorial realm.

When restricting Theorem~\ref{thm:describe-nar-axb}
to commutative variables, we obtain the following.  (Again, Remark~\ref{rem:reduce-formulas} applies.)

\begin{thm}\label{thm:describe-bar-axb}
Let $P=[a]\times[b]$ and $g\in \kk^P$.  Then
\begin{itemize}
    \item $(\BAR g)(1,1) = \dfrac{\kappa}{(\up^{-1}g)(1,1)}$,
    \item $(\BAR g)(1,j) = \dfrac{(\up^{-1}g)(1,j-1)}{(\up^{-1}g)(1,j)}$
    for $j\geq 2$,
    \item $(\BAR g)(i,1) = \dfrac{(\up^{-1}g)(i-1,1)}{(\up^{-1}g)(i,1)}$
    for $i\geq 2$,
    \item $(\BAR g)(i,j) = \dfrac{(\up^{-1}g)(i-1,j)\cdot (\up^{-1}g)(i,j-1)\cdot g(i-1,j-1)}{(\up^{-1}g)(i-1,j-1)\cdot
    (\up^{-1}g)(i,j)}$
    for $i,j\geq 2$.
\end{itemize}
\end{thm}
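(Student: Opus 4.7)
The plan is to derive Theorem~\ref{thm:describe-bar-axb} as a direct commutative specialization of Theorem~\ref{thm:describe-nar-axb}.  As observed in the discussion following Theorem~\ref{thm:st}, any birational identity that does not contain subtraction or additive inverses descends from the noncommutative realm over $\bbs$ to the commutative birational realm over $\kk$; one simply replaces each overlined $\overline{x}$ by the reciprocal $x^{-1}$ and uses commutativity to collect factors.  A glance at Definition~\ref{def:NCtransf} and the formulas in Theorem~\ref{thm:describe-nar-axb} confirms that nothing other than sums, products, and $C$ (which is already central) appears—so this passage is valid without modification.

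First I would handle the three ``boundary'' cases.  The formulas for $(\NAR g)(1,1)$, $(\NAR g)(1,j)$, and $(\NAR g)(i,1)$ in Theorem~\ref{thm:describe-nar-axb} already involve only a single overlined factor, so passing to $\kk$ gives respectively $C/(\up^{-1}g)(1,1)$, $(\up^{-1}g)(1,j-1)/(\up^{-1}g)(1,j)$, and $(\up^{-1}g)(i-1,1)/(\up^{-1}g)(i,1)$, matching the first three bullets of Theorem~\ref{thm:describe-bar-axb} verbatim.

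Next I would treat the interior case $i,j\ge 2$.  The noncommutative expression
\[
(\NAR g)(i,j) = \overline{(\up^{-1}g)(i,j)} \cdot (\up^{-1}g)(i-1,j) \cdot g(i-1,j-1) \cdot \overline{(\up^{-1}g)(i-1,j-1)} \cdot (\up^{-1}g)(i,j-1)
\]
becomes, under commutativity, exactly
\[
\frac{(\up^{-1}g)(i-1,j)\cdot (\up^{-1}g)(i,j-1)\cdot g(i-1,j-1)}{(\up^{-1}g)(i-1,j-1)\cdot (\up^{-1}g)(i,j)},
\]
which is the fourth bullet.  The two alternative noncommutative forms in Theorem~\ref{thm:describe-nar-axb} collapse to the same commutative expression, so there is no ambiguity to resolve.

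There is no substantive obstacle: the heavy lifting—the inductive toggling argument and the skew-field identity $\overline{\overline{A}+\overline{B}} = B\,\overline{(A+B)}\,A$—was already carried out in the proof of Theorem~\ref{thm:describe-nar-axb}.  The only minor point worth noting in the write-up is that the $\up^{-1}$ of Definition~\ref{def:NCtransf} specializes to the $\up^{-1}$ of Definition~\ref{def:bir-trans} under commutativity, since both are the same sum of products over saturated chains from $x$ to $\widehat{1}$.
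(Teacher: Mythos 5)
Your proof is correct and takes essentially the same approach as the paper, which simply states the result as an immediate consequence of Theorem~\ref{thm:describe-nar-axb} under restriction to commuting variables (``When restricting Theorem~\ref{thm:describe-nar-axb} to commutative variables, we obtain the following'') with no separate argument. Your write-up just spells out the bullet-by-bullet specialization that the paper leaves implicit.
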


This theorem describes how $\BAR$ acts on each of the individual labels.  We believe this is a first step toward one of our goals: to give
a nice description of how $\BAR$ acts on all of the factors that arise.  For example, on $P=[2]\times[3]$,
the factors $x+y$ and $vx+wx+wy$ arise when applying $\BAR$ (and $v+w$ appears after applying $\BAR$ a second time).
We would like to better understand what $\BAR$ does to these factors as well.  This should help us come up with a formula
for iterations of $\BAR$, i.e., $\big(\BAR^k g\big)(i,j)$
similar to the one found by Musiker and the second author for
birational \emph{order} rowmotion~\cite{musiker-roby}.

Another direction for further study is to study other posets.
Sam Hopkins and the first author have used the birational Stanley--Thomas word to prove a homomesy result on the type $A$ root poset
$\Phi^+(A_n)$~\cite{birational-LK} by using an embedding due to Grinberg and the second author of $\Phi^+(A_n)$ into $[n+1]\times[n+1]$ described in~\cite[Lemma~67]{GrRo15}.
Though this work is in the commutative birational realm, we also conjecture periodicity for noncommutative rowmotion on $\Phi^+(A_n)$.

It is quite possible that there may be analogues in other posets; $[a]\times[b]$ is in the larger family of minuscule posets.
Homomesy and periodicity of rowmotion has been found in all minuscule posets, even at the birational realm~\cite{okada-minuscule}.
Computer data suggests noncommutative rowmotion is periodic on minuscule posets as well as on root posets of coincidental types
(see~\cite[\S8]{hppw} for a definition), and trapezoid posets.
Trapezoid posets
$$T_{a,b} = \{(i,j)\in\zz^2: 1\leq i\leq a \text{ and } i\leq j\leq a+b-i \}$$
for $a\leq b$ were first considered by
Stembridge~\cite{stembridge-trapezoid}
and Stanley~\cite{stanley-plane-partitions}.
The poset $T_{a,b}$ is a doppelg\"anger of
$[a]\times[b]$ in the language of Hamaker,
Patrias, Pechenik, and Williams~\cite{hppw},
meaning $T_{a,b}$ and $[a]\times[b]$ have the same order polynomial.
Trapezoid posets have sparked increased interest
recently in~\cite{reiner-tenner-yong,hopkins2019minuscule} and appear to share similar
properties to rectangle posets, so there may be an analogue to this work within $T_{a,b}$.
We conjecture that rowmotion has order $a+b$
on $T_{a,b}$ in the birational and noncommutative realms, but this is yet to be proved even in the birational realm.
Periodicity in the combinatorial realm
has been proven only very recently by constructing an equivariant bijection between
rowmotion on $[a]\times[b]$ and rowmotion on $T_{a,b}$~\cite{minnesota-reu-trapezoid}.

\acknowledgements
\label{sec:ack}
The authors gratefully acknowledge useful conversations with 
David Einstein,
Darij Grinberg,
Sam Hopkins,
Gregg Musiker,
S\={o}ichi Okada,
James Propp,
Vic Reiner,
Jessica Striker,
Hugh Thomas,
and Nathan Williams. In particular, Grinberg made a number of helpful suggestions on an
earlier draft of this paper, including shortening the proof of one of our main results.  The first author is grateful for travel support provided by a Simons collaboration grant awarded to James Propp.
Several computations invaluable
to this work were done in Sage~\cite{sage}.

\nocite{*}
\newcommand{\etalchar}[1]{$^{#1}$}

\label{sec:biblio}

\end{document}